\newcommand{\R}{\mathbb{R}}
\newcommand{\p}{\partial}
\newtheorem{theorem}{Theorem}
\newtheorem{lemma}{Lemma}
\newtheorem{proposition}{Proposition}
\newtheorem{remark}{Remark}
\begin{document}

\title[Change of bifurcation type in FBP of a moving cell]{Change of bifurcation type in 2D free boundary model of a moving cell with nonlinear diffusion
}
\author{Leonid Berlyand}
\address{Department of Mathematics\\
         The Pennsylvania State University\\
         University Park, Pennsylvania 16802\\
         USA}
\email{lvb2@psu.edu}

\author[Oleksii Krupchytskyi]{Oleksii Krupchytskyi\textsuperscript{*}}
\address{Department of Mathematics\\
         The Pennsylvania State University\\
         University Park, Pennsylvania 16802\\
         USA}
\email{omk5165@psu.edu}
\thanks{$^*$Corresponding author. \textit{E-mail address}: \texttt{omk5165@psu.edu}}

\author{Tim Laux}
\address{Institute for Mathematics\\
         Heidelberg University\\
         Im Neuenheimer Feld 205\\
         D-69120 Heidelberg\\
         Germany}
\email{tim.laux@math.uni-heidelberg.de}

\maketitle

\begin{abstract}
 We introduce a 2D free boundary problem with nonlinear diffusion that models a living cell moving on a substrate.
 We prove that this nonlinearity results in a qualitative change of solution behavior compared to the linear diffusion case (Rybalko et al.\ TAMS 2023), namely the switch between supercritical and subcritical pitchfork bifurcation.

    Our objectives are twofold: 
    (i) develop a rigorous framework to prove existence of bifurcation and determine its type (subcritical vs. supercritical) and (ii) the derivation of explicit analytical formulas that control the change of bifurcation type in terms of physical parameters and explain the underlying biophysical mechanisms.

    While the standard way of applying the Crandall-Rabinowitz theorem via the solution operator seems difficult in our quasilinear PDE system, we apply the theorem directly, by developing a multidimensional, vectorial framework.
    To determine the bifurcation type, we extract the curvature of the bifurcating curve from the expansion of the solutions around the steady state.
    The formula for the curvature is obtained via a solvability condition where instead of the Fredholm alternative, we propose a test function trick, suited for free boundary problems.
    
    Our rigorous analytical results are in agreement with numerical observations from the physical literature in 1D (Drozdowski et al.\ Comm.\ Phys.\ 2023) and provide the first extension of this phenomenon to a 2D free boundary model.
    	\medskip
    
  \noindent \textbf{Keywords:} Free boundary problems; bifurcation; nonlinear diffusion; cell motility; active matter; Crandall-Rabinowitz theorem

  \medskip

\noindent \textbf{Mathematical Subject Classification (MSC20)}: 35B32; 70K50; 92C17.
\end{abstract}
\setcounter{tocdepth}{2}
\tableofcontents
\section{Introduction}
\label{intro}

\subsection{Motivation and context}

Keller-Segel-type PDE systems in a domain with moving and deformable boundary arise in the modeling of motility (self-sustained motion) of living cells. Such motility is a hallmark of active matter (also known as active materials) which is a fast-growing field in both physics and applied mathematics \cite{aranson2022bacterial, gompper20202020, marchetti2013hydrodynamics, ron2023polarization}.  
From the mathematical perspective, there are two main PDE approaches in cell motility: via phase-field or free-boundary models.
Phase fields have been extensively used to study the evolution of the cell shape both analytically and numerically \cite{ berlyand2016phase, ziebert2016computational}. 
However, fundamental mathematical questions such as existence of traveling wave solutions, their emergence via bifurcations from a stationary solution, and stability can be better answered in the context of free boundary models.
\medskip

In this work we introduce and study a 2D model of cell motility with \emph{nonlinear} myosin diffusion which mathematically amounts to a coupled system of elliptic and parabolic PDEs in a free boundary setting with a nonlocal boundary condition. Our goals are twofold: (i) a rigorous proof of the existence of bifurcation and establishing its type (subcritical vs. supercritical) and (ii) derivation of analytical formulas that control the change of the bifurcation type in terms of physical parameters and explain the biophysical mechanisms underlying the bifurcation change.
The two different bifurcation types lead to crucially different scenarios of the onset of cell motion and are naturally connected to different stability behavior. 
In particular, subcritical bifurcation typically leads to bistability of the steady and motile states.
The present work is motivated by numerical studies of a 1D model in \cite{DroZieSch23}.
Our results confirm these findings and extend them to 2D, which provides connections to experimental studies on the onset of cell motility, e.g.~\cite{lacayo2007emergence, keren2008mechanism}.
Our findings are in stark contrast to the case of \emph{linear} myosin diffusion, in which only supercritical pitchfork bifurcation is observed~\cite{RybBer23}. 
In the special case of a fixed cell boundary and in the vanishing friction limit in the 1D model, formulas for the bifurcation change were derived via formal asymptotics in \cite{chelly2022cell}. 
The existence of the bifurcation for this special case can be established via simplified 1D analogs of the techniques proposed in the present work. 
The change of bifurcation type appears to be ubiquitous in active matter -- not just in cell motility. 
For example, recent experimental studies \cite{barnhart2011adhesion, barnhart2017adhesion} 
suggest that both supercritical and subcritical pitchfork bifurcations can appear, capturing different physics.
We believe that the analytical techniques developed in this paper will lead to a more general understanding of bifurcation phenomena across various problems of active matter.

\medskip

We briefly comment here on the literature on free boundary models. PDE problems in domains with moving and deformable boundaries arise in mathematical modeling in physics, materials science, and biophysics. {They date back to the seminal works on Stefan~\cite{Crank1984}} 
and Hele-Shaw~\cite{gustafsson2004conformal} problems.
In recent decades, free boundary models have been used to model tumor growth~\cite{friedman2006asymptotic, siewe2023cancer, ZhaBeiZha2023bif}, tissue growth~\cite{king2021free, alert2019active, berlyand2024bifurcation}, and cell motility, see e.g.~\cite{RecTruThi13,RecPutTru15, PutRecTru18},\cite{MogCop2020}, \cite{blanch2013spontaneous}, \cite{cucchi2022self, cucchi2020cahn}, \cite{alazard2022traveling}, \cite{BerRybSaf2022,  RybBer23, BerSafTru26}. 

\medskip

 Several mathematical papers address the existence of pitchfork bifurcation to traveling waves in cell motility, see e.g.~\cite{MeuAla2025, MeuMagGar2025, cucchi2022self}, \cite{RybBer23, BerRybSaf2022}, \cite{alazard2022traveling}.
 The proofs in these works are based on the Crandall-Rabinowitz theorem \cite{crandall1971bifurcation} in the functional setting based on the solution operator. This strategy was also applied to the analysis of tumor growth and plaque formation free boundary models, e.g.~\cite{ZhaBeiZha2023bif},\cite{ borisovich2005symmetry}.
 However, due to the nonlinear diffusion, our PDE is quasilinear (rather than the previously studied linear case). This makes establishing the existence of a {general} solution operator rather difficult, and instead we apply the Crandall-Rabinowitz theorem directly to the PDE system, which leads to verifying the transversality and simple eigenvalue condition in a multidimensional, vectorial setting.
The recent works~\cite{carrillo2023noise,carrillo2025well} rigorously establish the  bifurcation (where the noise level plays the role of the bifurcation parameter) from a homogeneous state to various patterns in a mean field PDE model for grid cells in the brains of mammals, as well as nonlinear stability of solutions. {The existence of traveling wave solutions in the presence of nonlinear diffusion was established for a fixed boundary domain in~\cite{du2025precise,audrito2019bistable}, whereas in the present paper we consider domains with moving and deformable boundary.}

\medskip
{The main mathematical novelty of our work is the rigorous derivation of an explicit formula that determines the bifurcation type in our 2D free boundary problem in terms of physical parameters (see Theorem~\ref{th:K2_f-la}).}
We expand the branch of traveling wave solutions around the steady state and find the curvature of the bifurcating curve at the bifurcation point in the third-order expansion. Instead of the Fredholm alternative, which easily applies in the absence of a free boundary, we introduce a suitable test function to extract this information. 
With this formula, we can prove the change of bifurcation type for relevant physical choices of the diffusion coefficient (such as the van-der-Waals model~\cite{DroZieSch23}), see Remark~\ref{cor:Ulrich}.

\subsection{Formulation of the problem}
We consider a two dimensional free boundary model for a keratocyte cell moving on a flat substrate with general nonlinear myosin diffusion including the van der Waals model. 

\medskip

The cell occupies a time-dependent domain $\Omega(t)\subset \R^2$ with a free boundary $\partial \Omega(t)$.
The velocity field of the cell $\mathbf{v}(\cdot, t): \Omega(t) \to \mathbb{R}^2$ is related to the scalar stress $\sigma(\cdot,t)$ through Darcy's law
\begin{equation}\label{eq:darcy}
  \mathbf{v} = \frac{1}{\zeta}\nabla \sigma \quad \text{in }\Omega(t)
\end{equation}
with drag coefficient $\zeta$. The stress is modeled by the constitutive law
\begin{equation}\label{eq:const_law}
    \mu \text{div } \mathbf{v} = \sigma - \chi m \quad \text{in } \Omega(t),
\end{equation}
where $\mu $ is the effective viscosity of the actin-myosin gel, $m(\cdot,t)$ is the density of myosin motors, and $\chi$ is the contractility per myosin motor protein. 
We impose the nonlocal elastic boundary condition
\begin{equation}\label{eq:nonloc_bc}
\sigma = -\tilde\gamma H -k\frac{|\Omega(t)| - |{\Omega_{\mathrm{ref}}}|}{|{\Omega_{\mathrm{ref}}}|} \quad \text{on } \partial \Omega(t),
\end{equation}
where $|\Omega|$
{is the area of the current domain, $\Omega_{\mathrm{ref}}= B_{R_\mathrm{ref}}$ is the reference ball domain of the cell of radius $R_{\mathrm{ref}}$,}
 and $k$ is the inverse elasticity coefficient of the cell membrane. 
The boundary velocity is related to the flow field via the kinematic boundary condition 
\begin{equation}\label{eq:kinematic_bc}
    V_\nu = \mathbf{v} \cdot \nu \quad  \text{on } \partial \Omega(t),
\end{equation}
stating that the free boundary is transported by the velocity field $\mathbf v$. Here $V_\nu$ denotes the normal velocity of the boundary, cf.~classical Hele-Shaw in fluids. 
The main novelty of this model lies in the advection-diffusion equation for the myosin motor density
\begin{equation}\label{eq:adv_diff_eq}
    \partial_t m + \text{div}(m \mathbf{v}) = \text{div} (\mathcal D D(m) \nabla m) \quad \text{on } \Omega(t),
\end{equation}
where we introduce the (nondimensional) nonlinear diffusion coefficient $D(m)$ and diffusivity constant $\mathcal D$. 
The case $D(m) = 1$ corresponds to the case of linear diffusion studied in \cite{BerRybSaf2022,RybBer23}. 
Our results hold for a general form of the nonlinear diffusion coefficient and we also show how the results can be applied to a particular $D(m)$, such as the van der Waals model in \cite{DroZieSch23}. 
The system is complemented with the no-flux boundary condition
\begin{equation}\label{eq:no_flux_bc}
\partial_\nu m = 0\quad \text{on } \partial \Omega(t),
\end{equation}
ensuring the conservation of total myosin mass
\begin{equation}
\int_{\Omega(t)} m(x,t) \, dx = M \quad \text{for all } t\geq 0.
\end{equation}

\medskip

Following the non-dimensionalization in \cite{RecPutTru15} we derive three non-dimensional parameters $K=\frac{k}{\zeta \mathcal D}$ (the Peclet number), $P=\frac{\chi M}{ k{\pi R_{\mathrm{ref}}^2}}$ (myosin contractility per motor), $Z=\frac{\mu}{\zeta {\pi R_{\mathrm{ref}}^2}}$ (arising from the ratio of dissipative to friction length scales), as well as non-dimensional surface tension $\gamma = {\frac{\tilde \gamma}{k R_{\mathrm{ref}}}}$.
In their non-dimensional form, the governing equations for the 2D free-boundary model are
\begin{align}
  Z\Delta\sigma        &= \sigma - Pm        && \text{in } \Omega(t), \label{eq:1_2d}\\
  \partial_t m         &= \operatorname{div} \bigl(D(m)\nabla m - K m \nabla\sigma\bigr)
                                                && \text{in } \Omega(t),\label{eq:2_2d}\\
  \partial_\nu m       &= 0                  && \text{on } \partial\Omega(t),\label{eq:3_2d}\\
  \sigma               &= -\gamma H + 1 - |\Omega(t)|
                                                && \text{on } \partial\Omega(t),\label{eq:4_2d}\\
  K\partial_\nu\sigma  &= V_\nu              && \text{on } \partial\Omega(t), \label{eq:5_2d}
\end{align}
{with the integral constraint 
\begin{equation}\label{eq:myosin_mass}
  \int_{\Omega(t)} m(x,t)\,dx =1 \quad \text{for all } t\geq 0.
\end{equation}
}
\medskip

{In the physical range of parameters}, the system admits a simple stationary solution corresponding to a radially symmetric resting cell
\begin{equation}\label{eq:steady_state}
\Omega(t)= B_{R_0},\quad m(x,t) = m_0 = \frac1{\pi R_0^2}, \quad \sigma(x,t) = \sigma_0=-\frac{\gamma}{R_0}+1-\pi R_0^2,
\end{equation}
where $R_0$ is the largest positive solution of $0=- \frac{\gamma}{R_0}+1-\pi R_0^2  - \frac{P}{\pi R_0^2}$, ensuring the compatibility in \eqref{eq:1_2d}. 
For $\gamma=0$, the exact value is easily calculated as $R_0=R_0(P)= \frac1{\sqrt{\pi}} \big(\frac12 +(\frac14 - P )^\frac12\big)^\frac12$.  Note that the two negative solutions are unphysical and we expect the smaller positive root to give rise to an unstable steady state, as was observed in the 1D case~\cite{RecPutTru15}. Note also that these steady states do not depend on $K$.

\medskip

Observe that this system has several interesting features. First, note that boundary condition \eqref{eq:4_2d} is non-local. It was introduced in \cite{BerRybSaf2022} for a 2D model and generalized the nonlocality in the 1D model  \cite{RecTruThi13,RecPutTru15, PutRecTru18}. This boundary condition was further mathematically studied in \cite{RybBer23} and in \cite{BerRybSaf2022}. This nonlocality was shown to result in the non-self-adjointness (NSA) of the linearized operator for the problem \eqref{eq:1_2d}--\eqref{eq:5_2d}. It was shown in \cite{BerSafTru26} that due to the NSA properties the standard eigenvalues (eigenmodes) stability analysis does not apply and in particular eigenvectors may not span the entire phase space. The linear and nonlinear stability was established subsequently in \cite{ RybBer23, BerRybSaf2022,} based on the analysis of the resolvent operator. 

\medskip

Another notable feature of this model is the cross-diffusion Keller-Segel type term in \eqref{eq:2_2d} that may result in a blow-up which interacts with nonlinearity due to the moving and deformable free boundary. Also, classical free boundary techniques based on the conformal mapping of $\Omega(t)$ to the unit disk cannot be applied here because the nonlinear PDE \eqref{eq:1_2d}--\eqref{eq:5_2d} is not conformally invariant unlike the classical Hele-Shaw problem, where the pressure is harmonic.

\subsection{Main results}

A central goal in the study of the system \eqref{eq:1_2d}--\eqref{eq:5_2d} is to understand the bifurcation from the stationary solutions to the traveling wave solutions (TWs). 
First, we prove the existence of traveling wave solutions 
and their bifurcation via the Crandall-Rabinowitz theorem~\cite{crandall1971bifurcation}. 
{Proposition~\ref{thm:bif_and_tw_existence} (below)} states that the bifurcation from the steady state~\eqref{eq:steady_state} occurs at the critical Peclet number $K=K_0$ that is the solution of the transcendental equation
\begin{equation}\label{eq:Bif_cond}
 P m_0 - \frac{D(m_0)}{K_0}\frac{J_1(\alpha)}{\alpha J'_1(\alpha)}=0,
\end{equation}
where $J_1$ is the first Bessel functions of the first kind and
\begin{equation}\label{eq:def_alpha}
\alpha=\alpha(K_0)=\frac{R_{0}}{\sqrt{Z}}\,
\sqrt{\frac{P\,K_{0}\,m_{0}}{D(m_{0})}-1}.
\end{equation}
The {Proposition} applies in this general context, we only need to impose the following {non-resonance condition} 
\begin{equation}\label{eq:non-resonance}
J_n'(\alpha) \neq 0,\ n\geq 2,
\end{equation}
 and non-degeneracy
\begin{equation}\label{eq:tranversality_cond}
 \frac{R_0^2}{Z} \frac{1}{\alpha J_1'(\alpha)} \int_0^1 s J_1(\alpha s)^2\,ds \neq 1
\end{equation}
 conditions on our physical parameters $P$, $Z$, and the diffusion coefficient $D(m)$.

The non-degeneracy condition ensures that the two solution branches are non-tangential at the bifurcation point and appears in our analysis of the transversality condition in the Crandall-Rabinowitz theorem~\cite{crandall1971bifurcation}.
{
\begin{proposition}[Existence and bifurcation of TWs]\label{thm:bif_and_tw_existence}
   Let $P,Z,\gamma>0$ and let $(R_0, m_0, \sigma_0)$ be the homogeneous stationary solution of \eqref{eq:1_2d}--\eqref{eq:5_2d} given by \eqref{eq:steady_state}. 
   Let $K_0$ be the critical value of the bifurcation parameter $K$ given by the transcendental equation \eqref{eq:Bif_cond}. 
   Let the nonlinear diffusion coefficient $D =D(m)$ be positive and four times continuously differentiable at $m_0$. {Moreover, assume that the non-resonance condition \eqref{eq:non-resonance} and the non-degeneracy condition \eqref{eq:tranversality_cond} are satisfied.}

   Then, there exists an interval $I = (-\varepsilon, \varepsilon)$, a function $R: I \times S^1 \to \R $ such that $R(V, \cdot)$ parametrizes  the boundary $\partial \Omega_V$ of a domain $\Omega_V$, and three
   functions 
   \begin{equation*}
     m\colon \{(V,x)\colon V\in I ,\, x\in  \Omega_V\} \to \R,\ \sigma: \{(V,x)\colon V\in I ,\, x\in  \Omega_V\}\to \R,\ K\colon I\to \R,
   \end{equation*}
   such that, for all $V \in I$, the tuple
   \begin{equation}
       (\Omega_V+Vt\,e, m(V,x-Vt\,e), \sigma(V,x-Vt\,e))
   \end{equation}
   is a traveling wave solution to the system \eqref{eq:1_2d}--\eqref{eq:5_2d} with Peclet number $K = K(V)$.
    This one-parameter family of traveling waves bifurcates from the steady state \eqref{eq:steady_state} at $K(0)=K_0$ and $V=0$. Moreover, this family of solutions is three times continuously differentiable in $V$.
\end{proposition}
}

Proposition~\ref{thm:bif_and_tw_existence} allows us to expand the Peclet number $K$ for small velocities $V$ around the bifurcation point
\begin{align}\label{eq:expansionKmainresults}
 K= K_0 + K_1V + K_2V^2 +\ldots .
\end{align}

The second-order coefficient $K_2$ -- the curvature of the bifurcation curve at the bifurcation point -- is the protagonist of this work as it determines the bifurcation type, cf.~Fig.~\ref{fig:K_2_for_eA}.
Note that $K_0$ is the location of the bifurcation point and by the symmetry $V \mapsto -V$ we have $K_1=0$.

\begin{figure}[h!]
    \centering
\includegraphics[width=.7\linewidth]{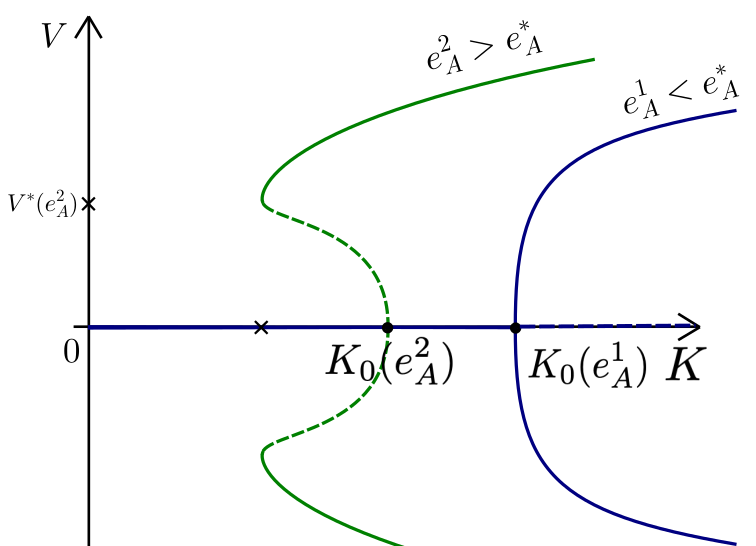}
    \caption{Bifurcation diagram for traveling
waves (TWs). Change of bifurcation type from supercritical (blue) to inverse (green) pitchfork at critical $e_A = e_A^\ast$. Bifurcation type depends on $K_2$: blue - $K_2>0$, green $K_2<0$.}
    \label{fig:K_2_for_eA}
\end{figure}

\medskip

The following main result of this paper provides a rigorous mathematical derivation of an explicit formula that controls the transition between sub- and supercritical bifurcation.  

\begin{theorem}[Bifurcation type]\label{th:K2_f-la}
	Consider the system \eqref{eq:1_2d}--\eqref{eq:5_2d} with given physical parameters $P, Z,\gamma>0$, and a positive four times continuously differentiable diffusion coefficient~$D = D(m)$. Let $m_0=m_0(P,\gamma)$ be the constant steady state~\eqref{eq:steady_state} and assume that our physical parameters satisfy relations~\eqref{eq:non-resonance},\eqref{eq:tranversality_cond}.
	
	Then $K_2$ in \eqref{eq:expansionKmainresults} is given by the explicit formula
\begin{equation}\label{eq:k2_fla_theorem}
        K_2 = A_1 \frac{D''(m_0)}{D(m_0)^2} + A_2 \frac{D'(m_0)^2}{D(m_0)^3} + A_3 \frac{D'(m_0)}{D(m_0)^2} + A_4 \frac{1}{D(m_0)},
    \end{equation}
  where  $A_i= A_i(P,Z, \gamma)$, $i=1,\ldots, 4$, are independent of $D(m)$ and are explicitly given by~\eqref{eq:A_i_fla}.
\end{theorem}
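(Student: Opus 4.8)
The plan is to follow the standard Lyapunov–Schmidt-type expansion of the bifurcating branch guaranteed by Theorem~\ref{thm:bif_and_tw_existence}, carry it to third order in the velocity parameter $V$, and extract $K_2$ from a solvability condition. Concretely, write the traveling wave solution as a perturbation of the steady state~\eqref{eq:steady_state},
\begin{equation*}
    m = m_0 + V m_1 + V^2 m_2 + V^3 m_3 + \ldots,\quad
    \sigma = \sigma_0 + V\sigma_1 + V^2\sigma_2 + \ldots,\quad
    R = R_0 + V R_1 + V^2 R_2 + \ldots,
\end{equation*}
together with the expansion $K = K_0 + K_2 V^2 + \ldots$ (recalling $K_1 = 0$ from the $V\mapsto -V$ symmetry), and substitute into the traveling wave form of \eqref{eq:1_2d}--\eqref{eq:5_2d}, where the time derivative $\partial_t$ becomes $-V\,e\cdot\nabla$. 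Collecting powers of $V$ yields a hierarchy of linear elliptic boundary value problems for $(m_j,\sigma_j,R_j)$, all governed by the same linearized operator $\mathcal L$ about the steady state, with right-hand sides built from lower-order terms. The first-order problem is the kernel equation whose solvability is precisely the bifurcation condition~\eqref{eq:Bif_cond}; its solution involves the Bessel function $J_1(\alpha r)$ in the angular mode $\cos\theta$, and fixes $m_1,\sigma_1,R_1$ up to the normalization implicit in using $V$ as the parameter. The second-order problem determines $(m_2,\sigma_2,R_2)$ in the radial and $\cos 2\theta$ modes; its right-hand side is quadratic in the first-order data and carries the first appearance of $D'(m_0)$ and $D''(m_0)$.

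The coefficient $K_2$ emerges at third order. The third-order problem has the form $\mathcal L(m_3,\sigma_3,R_3) = F_3$, where $F_3$ depends on $K_2$ linearly (through the term $K_2 V^2\,\partial_\nu\sigma$ in the kinematic condition and the corresponding term in the flux in \eqref{eq:2_2d}) and otherwise only on the already-computed first- and second-order data. Since $\mathcal L$ has a nontrivial kernel in the $\cos\theta$ mode, $F_3$ must satisfy a solvability (orthogonality) condition, and this single scalar condition determines $K_2$. Because the linearized operator here is non-self-adjoint (due to the nonlocal boundary condition~\eqref{eq:4_2d}, as emphasized in the introduction), we cannot invoke the Fredholm alternative against an eigenfunction of $\mathcal L$ in a naive way. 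Instead, following the "test function trick" advertised in the introduction, I would pair the third-order system against a carefully chosen test pair $(\varphi,\psi)$ — essentially an element of the kernel of the formal adjoint, realized concretely via the second Bessel function $Y_1$, which is why $Y_1$ and the integrals $\int_0^1 s\,Y_1(\alpha s)J_1(\alpha s)\,ds$, $\int_0^1 s\,J_1(\alpha s)^2\,ds$ appearing in~\eqref{eq:tranversality_cond} enter the final answer — and integrate by parts over $\Omega_{V=0}=B_{R_0}$, using the boundary conditions to eliminate the unknown $(m_3,\sigma_3,R_3)$. The condition~\eqref{eq:tranversality_cond} guarantees the pairing of $(\varphi,\psi)$ with the $K_2$-dependent part of $F_3$ is nonzero, so one can solve for $K_2$.

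Having reduced $K_2$ to an explicit integral expression in the first- and second-order data, the last step is bookkeeping: the first-order data are universal (pure Bessel functions depending only on $P,Z,\gamma$ through $\alpha$, $R_0$, $m_0$), while the second-order data split into a part independent of $D$ and parts proportional to $D'(m_0)/D(m_0)$ and $D''(m_0)/D(m_0)$ arising from Taylor-expanding $D(m)$ and $mD(m)$ around $m_0$ inside the flux. Tracking how these propagate linearly into $F_3$ and then through the solvability pairing shows $K_2$ is an affine combination of $D''(m_0)/D(m_0)^2$, $D'(m_0)^2/D(m_0)^3$, $D'(m_0)/D(m_0)^2$, and $1/D(m_0)$, with coefficients $A_i(P,Z,\gamma)$ given by the resulting Bessel integrals; one then records these as~\eqref{eq:A_i_fla}. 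The main obstacle I anticipate is organizing the second- and third-order computations so that the $D$-dependence is isolated cleanly — in particular, handling the $\cos 2\theta$ mode coupling at second order and making sure the free-boundary corrections $R_1, R_2$ are consistently incorporated into the boundary integrals in the solvability pairing; the non-self-adjointness means every integration by parts must be done carefully with the nonlocal term in~\eqref{eq:4_2d} tracked explicitly, rather than absorbed.
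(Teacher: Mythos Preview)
Your overall strategy is exactly the one the paper follows: expand the traveling-wave solution in powers of $V$, solve the first- and second-order systems, and extract $K_2$ from the solvability condition at third order by testing against a solution of the adjoint PDE. The paper organizes this through three lemmas that track the $D$-dependence of the first-, second-, and third-order data, which is precisely the bookkeeping you anticipate in your last paragraph.

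Two concrete details in your sketch are off, though, and would cause trouble if you proceeded as written. First, the test function is \emph{not} built from $Y_1$. The paper's test function solves $Z\Delta u + (\alpha/R_0)^2 u = 0$ in $B_{R_0}$ with $u = \cos\theta$ on the boundary, and the regular solution is $U(r)\cos\theta$ with $U(r) = J_1(\alpha r/R_0)/J_1(\alpha)$; this is the function against which the third-order equation is paired, and only $J_1$-integrals appear in the formula for $K_2$. The $Y_1$-integrals in~\eqref{eq:tranversality_cond} arise in a separate computation (verifying the transversality condition of Crandall--Rabinowitz via the variation-of-parameters solution to a nonhomogeneous Bessel equation), not in the solvability pairing for $K_2$. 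Second, $D''(m_0)$ does \emph{not} appear at second order: the second-order right-hand side is quadratic in $m_1$ and only produces $D'(m_0)$ (through $\tfrac12 D'(m_0)\Delta m_1^2$), so the second-order data split as $\frac{1}{D(m_0)^2}(\cdot) + \frac{D'(m_0)}{D(m_0)^3}(\cdot)$. The $D''(m_0)$ term enters only at third order through $\tfrac13 D''(m_0)\Delta m_1^3$. Finally, a minor simplification you may be missing: the symmetry argument gives $\rho_1 \equiv 0$, so no first-order boundary perturbation needs to be carried.
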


For a given set of physical parameters $P$, $Z$, $\gamma$ and diffusion coefficient $D(m)$, this formula allows to determine the bifurcation type and find the critical value of the bifurcation  parameter.
Indeed, our general result, Theorem~\ref{th:K2_f-la}, provides insight into a wide range of relevant physical models.
We illustrate this in our next main result, in which we apply our general formula \eqref{eq:k2_fla_theorem} to {{a 2D version of}} the van der Waals model for myosin~\cite{DroZieSch23}, and precisely predict the change of bifurcation that was previously observed numerically in 1D by Drozdowski et al.~\cite{DroZieSch23}. 

\medskip
\begin{figure}[h!]
    \centering
    \includegraphics[width=0.8\linewidth]{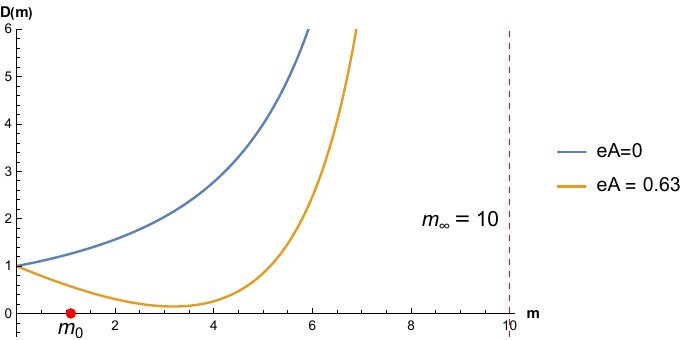}
    \caption{Graph of $D(m)$ given by~\eqref{eq:diff_coeff} for different choices of $e_A$ and $m_\infty = 10$. {Note the negative slope of $D(m)$ at $m=m_0$.}}
    \label{fig:D(m)}
\end{figure}
In this model,  the diffusion coefficient is of the form
\begin{equation}\label{eq:diff_coeff}
	D(m) = \frac{m_\infty^2}{(m_\infty-m)^2}- e_A m, 
\end{equation}
where $m_\infty$ is the saturation concentration of myosin, and $e_A$ is the cooperative binding ratio, see Fig.\ \ref{fig:D(m)}. Here $e_A$ must be smaller than $e_A^{crit}=\dfrac{27}{4m_\infty}$, so that $D(m)>0$ for all $m \geq 0$. {Here we take the values of the surface tension parameter $\gamma$ and  $m_\infty$ from experimental data in~\cite{barnhart2015balance,DroZieSch23} and a wide range of parameters $P,Z$, e.g.~\cite{barnhart2017adhesion}.}

\begin{figure}
    \centering
\includegraphics[width=.7\linewidth]{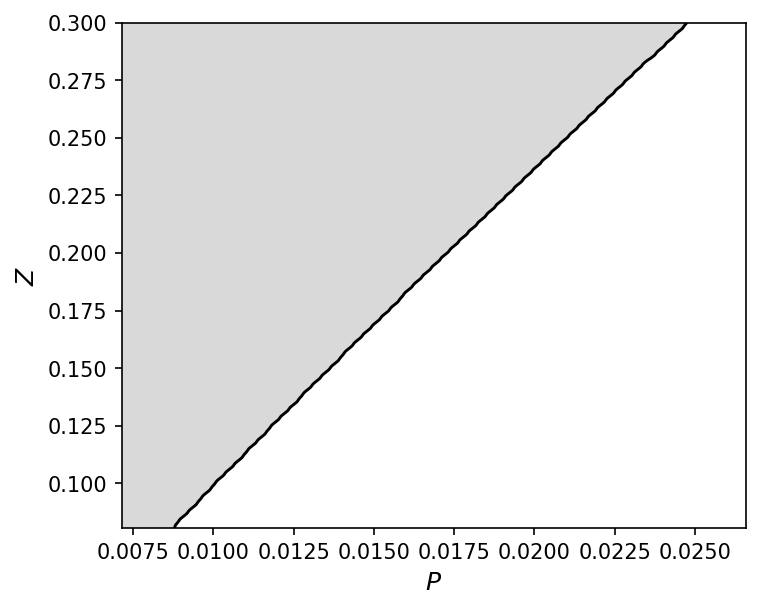}
    \caption{Shaded region in the \((P,Z)\)-plane is where the bifurcation type changes, for fixed \(\gamma=0.0025\). Here \(D(m)\) is given by \eqref{eq:diff_coeff} with \(m_\infty=10\).}
    \label{fig:change_of_bifurcation}
\end{figure}

\medskip
{
\begin{remark}[Change of bifurcation type in van der Waals model]\label{cor:Ulrich}
    Consider problem \eqref{eq:1_2d}--\eqref{eq:5_2d} with fixed \(
\gamma=0.0025, \)
and let the nonlinear diffusion coefficient \(D(m)\) be given by~\eqref{eq:diff_coeff} with \(m_\infty=10\).
Let \(K_2=K_2(e_A;P,Z)\) be given by~\eqref{eq:k2_fla_theorem}.

Numerical computations show that there exists a region
\(
\mathcal{R}\subset \mathbb{R}^2_{+}
\)
in the \((P,Z)\)-plane, displayed in Figure~\ref{fig:change_of_bifurcation}, such that for every \((P,Z)\in \mathcal{R}\) there exists a critical value
\[
e_A^\ast=e_A^\ast(P,Z)\in (0,e_A^{\mathrm{crit}})
\]
satisfying
\[
K_2\bigl(e_A^\ast;P,Z\bigr)=0.
\]
Moreover, the bifurcation from the stationary state to a traveling wave solution occurs
\begin{enumerate}[label=(\roman*)]
    \item via a supercritical pitchfork if \(e_A<e_A^\ast(P,Z)\), and
    \item via a subcritical pitchfork if \(e_A^\ast(P,Z)<e_A<e_A^{\mathrm{crit}}\).
\end{enumerate}
\end{remark}

Outside the region \(\mathcal{R}\) defined in Remark~\ref{cor:Ulrich}, no change of bifurcation type was observed numerically for \(e_A<e_A^{crit}\).

\begin{remark}
  {
  It is instructive to follow our proof along the 1D counterpart of \eqref{eq:1_2d}--\eqref{eq:5_2d}. In this case some computations simplify and one gets an analogous formula for $K_2$. In case of the van der Waals model this formula gives $e_A^\ast=0.5990\ldots$, which agrees with the numerical study in~\cite{DroZieSch23}. This is visualized in Fig.~\ref{fig:K2_of_eA}.}
\end{remark}
\begin{remark}
We verify numerically that the formula~\eqref{eq:k2_fla_theorem} in 2D is correct. This was done in two ways by i) computing the convergence rates of the residuals as $V \to 0$; ii) verifying that the $K_2$ we obtain indeed makes the third-order expansion of the system~\eqref{eq:1_2d}-\eqref{eq:myosin_mass} solvable. We refer to Appendix~\ref{app:numerics} for the details.
\end{remark}

Remark~\ref{cor:Ulrich} is visualized in Fig.~\ref{fig:K_2_for_eA}, Fig.~\ref{fig:change_of_bifurcation}, and {Fig.~\ref{fig:K2_of_eA}}.
Note that Fig.~\ref{fig:D(m)} shows that for $e_A= 0.63$ the diffusion coefficient $D(m)$ decreases at $m=m_0$ which is necessary for the subcritical pitchfork bifurcation in view of the signs of the coefficients $A_i$ in Remark~\ref{cor:Ulrich}. This again confirms the numerical findings in~\cite{DroZieSch23}.

\begin{figure}[h!]
    \centering
    \includegraphics[width=0.95\linewidth]{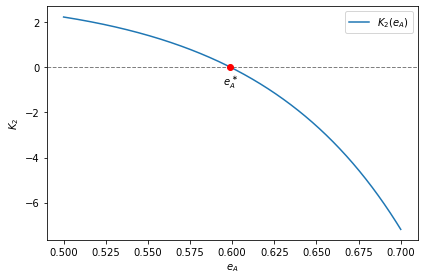}
    
    \caption{Graph of $K_2(e_A)$ for $D(m)$ in 1D for $P=0.1,\, Z=1.25,\,m_\infty =10$.}
    \label{fig:K2_of_eA}
\end{figure}
}
\medskip 
{
The key questions in biophysics studies of the motions of cells are transitions between steady and motile states. 
In particular, this transition can occur in two ways: by continuous increase of speed of resting cells or by ``jerking'' from zero to high velocity. Both of these scenarios can be experimentally observed, e.g., in \cite{lacayo2007emergence}. Such transitions can be described by supercritical and subcritical bifurcation, respectively. Furthermore, cell motility can be viewed as a competition between aggregation and diffusion of myosin, which in broad terms can happen in two scenarios: i) the diffusion dominates and the cell is at rest, or ii) the aggregation is strong enough to break the symmetry of myosin distribution inside the cell and the resulting asymmetry drives the motion.

In this context, Theorem~\ref{th:K2_f-la} and Remark~\ref{cor:Ulrich} can be interpreted as follows. Formula~\eqref{eq:k2_fla_theorem} for $K_2$ from Theorem~\ref{th:K2_f-la} shows that
the type of the bifurcation changes to subcritical pitchfork 
due to the fact that the diffusion starts decaying fast enough,
when $-D'(m_0)\gg 1$. This decay of diffusion facilitates rapid aggregation of myosin at the rear of the cell, which in turn leads to a jump in the velocity (``jerking'' behavior). In contrast, when $-D'(m_0)\ll 1$, cells behave similarly to the linear diffusion case $D'(m_0)=0$, where cells accelerate slowly. Remark~\ref{cor:Ulrich} addresses a specific form of $D(m)$ that appears in biophysics studies~\cite{DroZieSch23}. Then $-D'(m_0)=-\frac{2m_\infty^2}{(m_\infty-m_0)^3} +e_A$ is controlled by $e_A$ and for $e_A>e_A^*$ the ``jerking'' occurs, whereas for $e_A<e_A^*$ slow acceleration occurs. Note that our results allow for the explicit calculation of $e_A^*$ in terms of physical parameters and can be used in biophysics studies, both theoretical and experimental.

Finally, we mention that subcritical bifurcation in biophysics is known to be related to bistability \cite{DroZieSch23}, that is, the coexistence of steady and motile states for the same range of physical parameters. In experimental setting the switch between these two states can be done by external cues, such as an optogenetic activation that induces motion \cite{hadjitheodorou2021directional}. Our work raises a very interesting and challenging  question of establishing bistability mathematically. 
}

\subsection{Ideas of the proofs}\label{sec:ideas_of_proofs}

The proof of our main result, Theorem~\ref{th:K2_f-la}, rests on the asymptotic expansion of the traveling wave solutions for small velocities \eqref{eq:V_exp}-\eqref{eq:O_exp} provided by {Proposition}~\ref{thm:bif_and_tw_existence}. It is convenient to change coordinates into a moving frame with velocity $Ve$, in which the time-dependence is eliminated, so that the expansion reads
\begin{align}
	\sigma(r, \theta, V) &= \sigma_0 +\sigma_1(r,\theta) V+ \sigma_2(r,\theta)V^2+ \sigma_3(r,\theta)V^3+o(V^3)\label{eq:V_exp}\\
	m(r, \theta, V) &= m_0 + m_1(r, \theta) V + m_2(r, \theta) V^2+ m_3(r, \theta) V^3 + o(V^3)\\
	K(V) &= K_0 + K_1 V + K_2 V^2 + o(V^2)\\
    \rho(\theta,V) &= R_0 + \rho_1(\theta) V +\rho_2(\theta) V^2 + \rho_3(\theta) V^3 + o(V^3), \label{eq:O_exp}
\end{align}
where we express the coefficients in polar coordinates.
Due to the regularity provided by {Proposition}~\ref{thm:bif_and_tw_existence}, we can match coefficients of $V$ in this expansion to rigorously derive the PDE systems solved by the respective terms.
Our protagonist $K_2$ does not appear before the third-order expansion of the system~\eqref{eq:1_2d}--\eqref{eq:5_2d}, see Table~\ref{tab:expansion}. 

\begin{table}[h!]
\begin{center}
    \begin{tabular}{|c|c| c| c | c |}
      \hline
      & \multicolumn{4}{|c|}{Unknowns arising at $V^i$}\\
      \hline
       $i$ & 0 &1 & 2 & 3\\
      \hline
      $\sigma$ & $\sigma_0 = const$ & $\sigma_1$ & $\sigma_2$ & $\sigma_3$ \\
      \hline
      $m$ & $m_0 = const$ & $m_1$ & $m_2$ & $m_3$ \\
      \hline 
      $\rho$ & $R_0$ & $\rho_1=0$ & $\rho_2$ & $\rho_3 =0$ \\
      \hline
      $K$ & - & $K_0$ & $K_1=0$ & $K_2$ \\
      \hline
    \end{tabular}
\end{center}
\caption{Unknowns arising in expansion}\label{tab:expansion}
\end{table}

While the first-order system for $m_1$ and $\sigma_1$ can be solved analytically, the higher-order systems are no longer amenable to such a direct analysis.
Instead, we observe that the additional kinematic boundary condition~\eqref{eq:5_2d} in our free boundary problem makes this third-order expansion of the PDE system overdetermined and the formula for $K_2$ can be viewed as a compatibility condition, similar to the Fredholm alternative. 
While the Fredholm alternative applies in the stiff limit, the free boundary makes its application difficult due to the additional kinematic boundary condition.
Instead, we construct a test function that extracts a relevant mode from the third-order expansion of the PDE system~\eqref{eq:1_2d}--\eqref{eq:5_2d}.

The construction of this test function is as follows. We combine the third-order expansions of~\eqref{eq:1_2d} and~\eqref{eq:2_2d} suitably to an equation of the form
\begin{equation}\label{eq:proof_combined_eqs}
    \Delta \sigma +  \Big(\dfrac{\alpha}{R_0}\Big)^2\sigma = f(m_0,\sigma_0,\ldots,m_2,\sigma_2).
\end{equation} 
Our test function satisfies the formal adjoint PDE to this equation with constant non-homogeneous Dirichlet boundary conditions
\begin{equation}
\label{eq:testfunction}
\begin{cases}
    \Delta u + \Big(\dfrac{\alpha}{R_0}\Big)^2 u =0 & \text{in }B_{R_0},\\
    u= \cos \theta &  \text{on }\partial B_{R_0},
\end{cases}
\end{equation}
which in the one-dimensional case simply corresponds to a sine function.

Testing our derived PDE~\eqref{eq:proof_combined_eqs} with this test function then gives us an explicit formula for $K_2$.

\medskip
{
Finally, we mention that the proof of Proposition \ref{thm:bif_and_tw_existence} is based on the Crandall-Rabinowitz bifurcation theorem.  The application of this theorem in free-boundary problems is typically based on the existence of a solution operator which solves the PDE with all but one boundary condition. We propose an alternative approach which does not rely on a solution operator and allows one to bypass its construction and directly check transversality and simple zero eigenvalue conditions in a vectorial functional setting. {We remark} that constructing a solution operator in some applications (e.g. in our case) may be more difficult than the direct approach proposed here. We note that our approach involves several subtle analytical issues, such as choosing function spaces that provide sufficient regularity, introducing an auxiliary scalar variable to recover the Fredholm index-zero structure of the linearized operator, and imposing non-resonance and non-degeneracy conditions on the physical parameters.
}

\section*{Acknowledgments}

The authors are grateful to Ulrich Schwarz,  Falko Ziebert, Oliver Drozdowski, {and Alexander Mogilner} for many fruitful discussions and pointing out their work. 

\section*{Funding Declaration}
This project has received funding from Deutsche Forschungsgemeinschaft (DFG, German Research Foundation) 
under Germany's Excellence Strategy -- EXC-2047/1 -- 390685813
and from the Research Training Group 2339 IntComSin – Project-ID 321821685.
O.K.\ gratefully acknowledges summer support for extended stays at the Hausdorff Center for Mathematics (HCM), University of Bonn and IntComSin at the University of Regensburg. 

The work of L.B.\ and O.K.\ was partially supported by the National Science Foundation
grants DMS-2404546 and DMS-2005262. The authors also acknowledge the hospitality and support of Heidelberg University during their visits, where L.B.\ was Romberg Visiting Professor for a one-month sabbatical stay. {The authors acknowledge support of the Institut Henri Poincaré (UAR 839 CNRS-Sorbonne Université), and LabEx CARMIN (ANR-10-LABX-59-01)}

\section{Proof of Theorem \ref{th:K2_f-la}: Change of bifurcation type}

The proof of the theorem is based on the following three lemmas that concern the coefficients of the first- to third-order expansions in~\eqref{eq:V_exp}--\eqref{eq:O_exp} of the solutions of our PDE system around the steady state for small velocity.
These lemmas extract the precise dependence on the diffusion coefficient in the expansion.

\begin{lemma}[First-order expansion]\label{lem:first_ord}
    The first-order coefficients of the TW solution of~\eqref{eq:1_2d}--\eqref{eq:5_2d} and the bifurcation point $K_0$ are given by    \begin{align}\label{lem:1stord_2d}
        m_1(x)&= m_{11}(r) \cos \theta= \frac{1}{D(m_0)} \hat m_{11}(r) \cos \theta ,\\
        \sigma_1(x) &= \sigma_{11}(r) \cos \theta = \frac{1}{D(m_0)} \hat \sigma_{11}(r) \cos \theta,\\
        K_0 &= D(m_0) \hat K_0,
    \end{align}
where $\hat m_{11}(r)$, $\hat\sigma_{11}(r)$, and $\hat K_0$ are independent of the choice of $D(m)$.
\end{lemma}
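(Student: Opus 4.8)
The plan is to match the $O(V)$ coefficients in~\eqref{eq:V_exp}--\eqref{eq:O_exp}, solve the resulting first-order system in closed form, and then read off the dependence on $D(m_0)$. First I would pass to the moving frame $y=x-Vte$, pull the free boundary back onto $B_{R_0}$ by a Hanzawa transform, and insert the expansions into~\eqref{eq:1_2d}--\eqref{eq:5_2d}. Since $\sigma_0$ and $m_0$ are constant (so $\nabla\sigma_0=\nabla m_0=0$) and $\rho_1=0$ by Table~\ref{tab:expansion} (the first-order shape change is a pure translation of the disk, which we normalize away), the $O(V)$ terms collapse to
\begin{equation*}
Z\Delta\sigma_1=\sigma_1-Pm_1,\qquad \operatorname{div}\bigl(D(m_0)\nabla m_1-K_0m_0\nabla\sigma_1\bigr)=0\quad\text{in }B_{R_0},
\end{equation*}
together with $\partial_\nu m_1=0$, $\sigma_1=0$, $K_0\,\partial_\nu\sigma_1=\cos\theta$ on $\partial B_{R_0}$ and the mass constraint $\int_{B_{R_0}}m_1\,dx=0$; here the first variations of curvature and area enter only at order $V^2$ because $\rho_1=0$, so the Dirichlet datum for $\sigma_1$ is simply $0$ and the lone forcing is the kinematic term $\cos\theta$.

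Next I would solve this linear system explicitly. As in the proof of Theorem~\ref{thm:bif_and_tw_existence} the kernel of the linearization is spanned by the first Fourier mode, so $m_1=m_{11}(r)\cos\theta$ and $\sigma_1=\sigma_{11}(r)\cos\theta$ (and then the mass constraint is automatic). Setting $w:=D(m_0)m_1-K_0m_0\sigma_1$, the divergence equation gives $\Delta w=0$; being first-mode and regular at the origin, $w=c\,r\cos\theta$. Substituting $m_1=D(m_0)^{-1}(K_0m_0\sigma_1+c\,r\cos\theta)$ into the elliptic equation produces the inhomogeneous Helmholtz equation
\begin{equation*}
\Delta\sigma_1+\Bigl(\tfrac{\alpha}{R_0}\Bigr)^2\sigma_1=-\frac{Pc}{Z\,D(m_0)}\,r\cos\theta
\end{equation*}
with $\alpha$ exactly as in~\eqref{eq:def_alpha}, whose regular solution is $\sigma_{11}(r)=b\,J_1(\alpha r/R_0)+A\,r$ with $A=-Pc\,R_0^2/(Z\alpha^2D(m_0))$. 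The three scalar boundary conditions $\sigma_{11}(R_0)=0$, $K_0\sigma_{11}'(R_0)=1$, $m_{11}'(R_0)=0$ on the two free constants $b,c$ are compatible precisely when~\eqref{eq:Bif_cond} holds — which is the case at $K=K_0$ by Theorem~\ref{thm:bif_and_tw_existence} — and then determine $b$ and $c=-m_0$ uniquely.

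Finally I would read off the scaling. Put $\hat K_0:=K_0/D(m_0)$; then $\alpha=\tfrac{R_0}{\sqrt Z}\sqrt{P\hat K_0m_0-1}$ and $D(m_0)/K_0=\hat K_0^{-1}$, so~\eqref{eq:Bif_cond} becomes $Pm_0=\hat K_0^{-1}J_1(\alpha)/(\alpha J_1'(\alpha))$, which does not involve $D(m)$; hence $\hat K_0$ and $\alpha$ depend only on $P,Z,\gamma$, and $K_0=D(m_0)\hat K_0$. Feeding this back, $c=-m_0$, $A=D(m_0)^{-1}\hat A$ and $b=D(m_0)^{-1}\hat b$ with $\hat A,\hat b$ free of $D(m)$, so $\sigma_{11}=D(m_0)^{-1}\hat\sigma_{11}$, and from $D(m_0)m_{11}=K_0m_0\sigma_{11}-m_0r=\hat K_0m_0\hat\sigma_{11}-m_0r$ also $m_{11}=D(m_0)^{-1}\hat m_{11}$, with $\hat\sigma_{11},\hat m_{11}$ free of $D(m)$, which is the claim. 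The step I expect to be most delicate is the clean first-order expansion of the Hanzawa-transformed boundary conditions (first variations of curvature, area, outward normal and normal derivative); but since $m_0$ and $\sigma_0$ are constant almost all of those terms vanish, so it is routine, and the conceptual input — restriction to the first Fourier mode and the solvability condition being~\eqref{eq:Bif_cond} — is inherited from Theorem~\ref{thm:bif_and_tw_existence}, after which the scaling is immediate.
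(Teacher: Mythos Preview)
Your proposal is correct and essentially follows the paper's own proof. The only difference is one of presentation order: the paper first rescales via $(m_1,\sigma_1,K_0)=(D(m_0)^{-1}\hat m_1,\,D(m_0)^{-1}\hat\sigma_1,\,D(m_0)\hat K_0)$ and observes that the resulting first-order system is independent of $D$, whereas you solve the system explicitly (via the harmonic $w=D(m_0)m_1-K_0m_0\sigma_1$ and the Helmholtz equation, exactly as in Step~2 of the proof of Theorem~\ref{thm:bif_and_tw_existence}) and then read off the $D(m_0)^{-1}$ scaling from the closed-form solution; both routes yield the same formulas. One small remark: your justification of $\rho_1=0$ as ``a pure translation normalized away'' covers only the $\cos\theta$ mode; the vanishing of the remaining modes of $\rho_1$ comes from the fourth component of the linearized system (the first variation of \eqref{eq:4_2d}), which together with $\sigma_1(R_0,\cdot)=0$ forces all other Fourier modes of $\rho_1$ to zero --- this is implicit in the eigenvector computation in Theorem~\ref{thm:bif_and_tw_existence} that you invoke.
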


\begin{lemma}[Second-order expansion]\label{lem:2ndorder}
    The second-order coefficients of the TW solution of~\eqref{eq:1_2d}--\eqref{eq:5_2d} are given by  \begin{align}\label{lem:2ord1}
		m_2(x) &= m_{20}(r)+m_{22}(r)\cos (2\theta), \\
        \sigma_2(x) &= \sigma_{20}(r)+\sigma_{22}(r)\cos (2\theta),\\
        \rho_2(\theta) &= \rho_{20}+\rho_{22}\cos (2\theta),
		\label{lem:2ord3}
	 \end{align}	 
	 where all coefficients $m_{20}(r), m_{22}(r), \ldots$ depend on $D(m)$ in the same way via
     \begin{equation}
         m_{20}(r) =\frac{1}{D(m_0)^2} m_{20A}(r)+ \frac{D'(m_0)}{D(m_0)^3}m_{20B}(r).
     \end{equation}
\end{lemma}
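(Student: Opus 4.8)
The plan is to derive the second-order system by substituting the expansions \eqref{eq:V_exp}--\eqref{eq:O_exp} into \eqref{eq:1_2d}--\eqref{eq:5_2d} and collecting the $V^2$ terms, using the first-order information from Lemma~\ref{lem:first_ord}. Since $m_1 = \tfrac{1}{D(m_0)}\hat m_{11}(r)\cos\theta$ and $\sigma_1$ has the same structure, the quadratic nonlinearities that feed the second-order equations -- the advection term $\operatorname{div}(m\nabla\sigma)$ producing $\operatorname{div}(m_1\nabla\sigma_1)$, the diffusion term $\operatorname{div}(D(m)\nabla m)$ producing both $D'(m_0)m_1\Delta m_1$-type terms and $D(m_0)\Delta m_2$, and the curvature/area terms in the boundary condition \eqref{eq:4_2d} -- are all bilinear in first-order quantities or linear in second-order quantities. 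The key structural observation is that a product of two first-order quantities carries a prefactor $\tfrac{1}{D(m_0)^2}$, a product of a $D'(m_0)$ (from Taylor-expanding $D(m_0 + m_1 V + \dots)$) with a first-order derivative term carries $\tfrac{D'(m_0)}{D(m_0)^2}\cdot\tfrac{1}{D(m_0)} = \tfrac{D'(m_0)}{D(m_0)^3}$, and the second-order unknowns $m_2,\sigma_2,\rho_2$ enter multiplied by $D(m_0)$ (from the leading diffusion and from $K_0 = D(m_0)\hat K_0$). First I would write out this second-order system schematically as a linear elliptic--parabolic boundary value problem for $(m_2,\sigma_2,\rho_2)$ with right-hand side that is an explicit bilinear form in $(m_1,\sigma_1)$ plus $D'(m_0)$-corrections.

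Next I would exploit the angular structure. Since $\cos\theta\cdot\cos\theta = \tfrac12 + \tfrac12\cos 2\theta$ and $\sin\theta\cdot\sin\theta$, $\cos\theta\sin\theta$ decompose similarly, every bilinear combination of the first-order terms produces only the angular modes $1$ (constant) and $\cos 2\theta$. Because the linear operator governing the second-order system is rotationally equivariant (it inherits the symmetry of the radial steady state), it preserves angular modes, so the solution must be a superposition of the $0$-th and $2$nd Fourier modes in $\theta$; this gives the decomposition $m_2 = m_{20}(r) + m_{22}(r)\cos 2\theta$ and likewise for $\sigma_2$ and $\rho_2$ in \eqref{lem:2ord1}--\eqref{lem:2ord3}. (One should check the $\sin 2\theta$ and constant-$\sin$ modes vanish by the reflection symmetry $\theta\mapsto-\theta$ of the traveling-wave ansatz, which is preserved at each order.) For each fixed mode $n\in\{0,2\}$ the problem reduces to a system of ODEs in $r$ on $(0,R_0)$ with two-point boundary conditions, which is linear and has a right-hand side of the form $\tfrac{1}{D(m_0)^2}(\text{known})+ \tfrac{D'(m_0)}{D(m_0)^3}(\text{known})$; after dividing through by $D(m_0)$ -- which appears uniformly on the operator side -- the solution inherits exactly the claimed prefactors $\tfrac{1}{D(m_0)^2}$ and $\tfrac{D'(m_0)}{D(m_0)^3}$, with $D(m)$-independent profiles $m_{20A},m_{20B}$, etc. Crucially one must verify that the second-order system is uniquely solvable -- i.e. that no solvability (Fredholm) obstruction arises at second order; this is expected because the bifurcation condition \eqref{eq:Bif_cond} governing $K_0$ pins down the kernel at \emph{first} order (the $\cos\theta$ mode), while the second-order problem lives in the $0$ and $\cos 2\theta$ modes where the linearized operator is invertible, so the second-order coefficients are genuinely determined (no free parameter) and $K_1 = 0$ is consistent.

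I expect the main obstacle to be bookkeeping: correctly assembling the $V^2$-terms from the Hanzawa-transformed system, in particular the contributions from expanding the domain $\Omega_V$ (the $\rho_1 V$ perturbation of the boundary generates extra $V^2$ terms when plugged into the boundary conditions \eqref{eq:3_2d}--\eqref{eq:5_2d} and into the area term $|\Omega_V|$ in \eqref{eq:4_2d}), and tracking how the $D'(m_0)$ prefactor propagates through the Taylor expansion of $D(m_0 + m_1 V + \dots)\nabla(m_0 + m_1 V + \dots)$. Once the system is written correctly, the prefactor tracking is essentially an induction on the order of expansion -- one shows that at order $V^k$ every term is a sum of monomials in $\{D^{(j)}(m_0)\}$ divided by an appropriate power of $D(m_0)$, with the total "degree" fixed -- but doing this cleanly for order two already requires care. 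A secondary point to handle is that $\rho_2$ enters the equations through the Hanzawa change of variables and the kinematic condition \eqref{eq:5_2d}; one must check it is determined (not free) and carries the same $\tfrac{1}{D(m_0)^2}$, $\tfrac{D'(m_0)}{D(m_0)^3}$ split, which follows from the same mode-by-mode ODE analysis applied to the boundary unknowns.
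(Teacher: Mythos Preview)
Your approach is correct and essentially matches the paper's: derive the second-order system, substitute the first-order scalings from Lemma~\ref{lem:first_ord} together with $K_0 = D(m_0)\hat K_0$ so that the linear operator on $(m_2,\sigma_2,\rho_2)$ becomes $D$-independent, split the source terms into their $\tfrac{1}{D(m_0)^2}$ and $\tfrac{D'(m_0)}{D(m_0)^3}$ pieces (the paper's systems \eqref{eq:2a_eq} and \eqref{eq:2b_eq}), and solve each mode-by-mode using the $\cos\theta\cdot\cos\theta = \tfrac12 + \tfrac12\cos 2\theta$ reduction. One simplification you overlooked: $\rho_1 \equiv 0$ (by the parity argument \eqref{eq:2D:parity} combined with the centering constraint $\rho_{i1}=0$), so the free-boundary bookkeeping you anticipate from first to second order is in fact trivial---no $\rho_1$-induced terms enter the second-order boundary conditions.
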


As described in the previous section, the third-order expansion is crucial as it determines the curvature of the curve of bifurcating solutions. However, this system is too complicated to be solved. Nevertheless, our test method gives us an explicit formula by extracting a relevant mode from the PDE.

\begin{lemma}[Third-order expansion and $K_2$]\label{lem:3rdorder}
	The curvature $K_2$ of the bifurcating curve of TW solutions of~\eqref{eq:1_2d}--\eqref{eq:5_2d} at the bifurcation point $K_0$ is given by 
    \begin{align}\label{eq:K2_general}
    &K_2 
    = \bigg(-Z\frac{R_0}{K_0^2}+\frac{Pm_0}{D(m_0)}\int_0^{R_0}\sigma_{11}\left(r\right) U(r)rdr\bigg)^{-1} \times 
    \\&\times \Bigg\{\int_0^{R_0}  -P\!\left[
      \bigl(\rho_{20}+ \frac{1}{2}\rho_{22}\bigr) \left(\frac{K_0 m_0}{D(m_0)} \sigma''_{11}(R_0)-m_{11}''(R_0)\right){+}\frac{\rho_{22}}{R_0^2}m_{11}(R_0)
      \right]r U(r) rdr\nonumber \\
      &- \int_0^{R_0}  \frac{P}{D(m_0)} \left(\frac{1}{2r} \int_0^r s^2 f(s)\,ds +\frac{r}{2}\int_{r}^{R_0}f(s)\,ds+\frac{r}{2R_0^2}\int_{0}^{R_0} s^2f\left(s\right)ds\right)U(r) rdr \nonumber \\
    &\qquad\qquad \quad +Z R_0\left(\rho_{20}+\frac{\rho_{22}}{2}\right)\left(\sigma_{11}''(R_0)-\frac{\alpha/R_0}{K_0} \frac{J_1'(\alpha)}{J_1(\alpha)}\right) \Bigg\},\nonumber
\end{align}
where $u(r,\theta)=U(r)\cos \theta$ is the test function {given by explicit solution of}~\eqref{eq:testfunction} ({in Bessel functions}), and $f(s)$ is given by~\eqref{eq:f(s)}. Moreover, the function $U(r)$ is independent of $D(m)$, and the function $f(r)$ can be represented as 
\begin{equation*}
    f(r) = \frac{D''(m_0)}{D(m_0)^3}f_1(r)+\frac{D'(m_0)^2}{D(m_0)^4}f_2(r)+\frac{D'(m_0)}{D(m_0)^3}f_3(r)+\frac{1}{D(m_0)^2}f_4(r),
\end{equation*}
where $f_1(r), \ldots, f_4(r)$ are given by \eqref{eq:f1}--\eqref{eq:f4}.
\end{lemma}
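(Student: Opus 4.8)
The plan is to obtain all three assertions --- the closed formula \eqref{eq:K2_general}, the $D$-independence of $U$, and the decomposition of $f$ --- from a single test-function identity applied to the third-order expansion. \emph{Step 1: the third-order combined equation.} I would first Hanzawa-transform \eqref{eq:1_2d}--\eqref{eq:5_2d} onto the fixed disk $B_{R_0}$ via the radial graph $r\mapsto\rho(\theta,V)$ and insert \eqref{eq:V_exp}--\eqref{eq:O_exp}; by the regularity from Theorem~\ref{thm:bif_and_tw_existence}, the $V^3$-coefficients solve well-defined linear systems obtained by matching powers of $V$. At order $V^3$ the time derivative $\partial_t m$ contributes $-\partial_e m_2$, and the Hanzawa pullbacks of $\Delta$, $\nabla$, $\partial_\nu$, $H$ and $|\Omega_V|$ each produce corrections built from $\rho_2$ acting on the first-order profiles. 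Eliminating $m_3$ between the order-$V^3$ versions of \eqref{eq:1_2d} and \eqref{eq:2_2d} (substituting $m_3$ from \eqref{eq:1_2d} into \eqref{eq:2_2d}) yields an equation of the form \eqref{eq:proof_combined_eqs}, $Z\Delta\sigma_3+(\alpha/R_0)^2\sigma_3=f$, whose right-hand side $f=f(m_0,\sigma_0,\dots,m_2,\sigma_2;K_2)$ collects the order-$V^3$ parts of the nonlinearities $\operatorname{div}(D(m)\nabla m)$ and $\operatorname{div}(Km\nabla\sigma)$, the advection term, and the Hanzawa corrections. The extra kinematic condition \eqref{eq:5_2d} renders the boundary value problem for $\sigma_3$ formally overdetermined, and $K_2$ will be the ensuing compatibility condition.

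\emph{Step 2: test function and Green's identity.} Let $u(r,\theta)=U(r)\cos\theta$ solve \eqref{eq:testfunction}; it is a normalized first Bessel function with $U(R_0)=1$, and since the eigenvalue in \eqref{eq:testfunction} depends only on $Z$, $R_0$ and $\alpha$ while $\alpha$ is independent of $D$ by Lemma~\ref{lem:first_ord} (the factor $K_0=D(m_0)\hat K_0$ cancels the $D(m_0)$ in \eqref{eq:def_alpha}), the function $U$ is independent of $D(m)$ --- the second claim. I would then multiply \eqref{eq:proof_combined_eqs} by $u$, integrate over $B_{R_0}$, and apply Green's second identity twice; because $Z\Delta+(\alpha/R_0)^2$ is formally self-adjoint and $u$ solves the homogeneous equation, the bulk terms cancel, leaving
\[
\int_{B_{R_0}}u\,f\,dx \;=\; Z\int_{\partial B_{R_0}}\bigl(u\,\partial_\nu\sigma_3-\sigma_3\,\partial_\nu u\bigr)\,ds .
\]
As $u\propto\cos\theta$, both sides only detect the mode-$1$ Fourier sector: on the left only the mode-$1$ radial part of $f$ survives (products of the mode-$1$ first-order data with the mode-$0$/mode-$2$ second-order data feed modes $1$ and $3$ only), and on the right the contribution of the third-order boundary perturbation $\rho_3$ (a pure higher mode once $\rho_1=0$, see Table~\ref{tab:expansion}) drops out --- which is why $\rho_3$ is absent from \eqref{eq:K2_general}.

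\emph{Step 3: inserting the boundary data and solving for $K_2$.} Expanding \eqref{eq:4_2d} to third order and Taylor-expanding $\sigma$ from the true boundary $\rho(\theta,V)$ back to $\partial B_{R_0}$ gives the mode-$1$ trace $\sigma_3|_{\partial B_{R_0}}$ in terms of $\rho_{20},\rho_{22}$ and the first-order profile (the curvature and area being even in $V$ up to $O(V^4)$ after $\rho_1=0$). Expanding the kinematic condition \eqref{eq:5_2d} to third order gives $\partial_\nu\sigma_3|_{\partial B_{R_0}}$; since $K=K_0+K_2V^2+o(V^2)$, the term $K_2\,\partial_\nu\sigma_1|_{\partial B_{R_0}}=K_2\,\sigma_{11}'(R_0)\cos\theta$ appears, with $\sigma_{11}'(R_0)=1/K_0$ fixed by the first-order kinematic condition; moreover $K_2$ enters the bulk through $\operatorname{div}(K_2 m_0\nabla\sigma_1)=K_2 m_0\Delta\sigma_1$ inside $f$. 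Substituting these into the identity of Step~2 and using $U(R_0)=1$ and $U'(R_0)=\tfrac{\alpha}{R_0}\tfrac{J_1'(\alpha)}{J_1(\alpha)}$, the identity becomes affine in $K_2$: its coefficient is the bracket $Z R_0/K_0^2-\tfrac{Pm_0}{D(m_0)}\int_0^{R_0}(m_{11}+r/K_0)U\,dr$ (the combined kinematic and bulk $K_2$-parts) and the constant term assembles into the braced expression of \eqref{eq:K2_general}; solving for $K_2$ yields \eqref{eq:K2_general}. The $m_{11}''(R_0)$- and $m_{11}(R_0)$-terms there originate from the Hanzawa corrections in the two boundary-condition expansions.

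\emph{Step 4: $D$-dependence of $f$, and the main obstacle.} To obtain $f=\tfrac{D''(m_0)}{D(m_0)^3}f_1+\tfrac{D'(m_0)^2}{D(m_0)^4}f_2+\tfrac{D'(m_0)}{D(m_0)^3}f_3+\tfrac{1}{D(m_0)^2}f_4$ with $D$-independent $f_i$, note that the order-$V^3$ coefficient of $\operatorname{div}(D(m)\nabla m)$ is $D(m_0)\Delta m_3+D'(m_0)[m_1\Delta m_2+m_2\Delta m_1+2\nabla m_1\cdot\nabla m_2]+D''(m_0)[\tfrac12 m_1^2\Delta m_1+m_1|\nabla m_1|^2]$ --- no $D'''$ occurs, since $\Delta m_0=0$ and $|\nabla m|^2=O(V^2)$. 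The $D(m_0)\Delta m_3$ piece is moved to the left of \eqref{eq:proof_combined_eqs} by the elimination; into the remainder, and likewise into $\operatorname{div}(Km\nabla\sigma)$ and $-\partial_e m_2$, one substitutes the scalings from Lemma~\ref{lem:first_ord} ($m_1,\sigma_1$ carry exactly one factor $1/D(m_0)$), from Lemma~\ref{lem:2ndorder} ($m_2,\sigma_2,\rho_2$ carry $1/D(m_0)^2$ and $D'(m_0)/D(m_0)^3$), and $K_0=D(m_0)\hat K_0$; multiplying these monomials out term by term produces exactly the four stated combinations, with $f_i$ built only from the hatted, $D$-independent objects of Lemmas~\ref{lem:first_ord}--\ref{lem:2ndorder}. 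I expect the main obstacle to be the bookkeeping of Steps~1 and~3: assembling every contribution to $f$ (the order-$V^3$ parts of both nonlinearities, $-\partial_e m_2$, and all Hanzawa-Jacobian corrections from expanding $\Delta,\nabla,\partial_\nu,H,|\Omega_V|$ on the perturbed domain), verifying that the elimination of $m_3$ leaves precisely the Helmholtz operator with eigenvalue $(\alpha/R_0)^2$ from \eqref{eq:def_alpha} so that the test function of \eqref{eq:testfunction} annihilates the bulk, and carrying out the delicate third-order expansion of both boundary conditions.
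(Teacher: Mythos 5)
Your proposal is correct and follows essentially the same route as the paper's proof: reduce to the mode-one radial system at order $V^3$, eliminate $m_{31}$ to obtain the Helmholtz equation \eqref{eq:proof_combined_eqs} with the extra (overdetermining) kinematic boundary condition, extract $K_2$ by testing against $U$ and integrating by parts so that the bulk terms cancel, and read off the $D$-scaling of $f$ from Lemmas~\ref{lem:first_ord} and~\ref{lem:2ndorder}. The one slip is the parenthetical direction of elimination: substituting $m_3$ from \eqref{eq:1_2d} into \eqref{eq:2_2d} would yield a fourth-order equation, whereas the paper integrates the $m$-equation to express $m_{31}$ in terms of $\sigma_{31}$ (the resulting integration constant, fixed by the Neumann data and regularity at the origin, is precisely how $\rho_{20}$, $\rho_{22}$ and $m_{11}''(R_0)$ enter the bulk right-hand side) and then substitutes into the $\sigma$-equation --- a harmless correction, since you state the correct target equation.
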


The simple proofs of the lemmas are deferred to Appendix~\ref{app:lemmas}.

\begin{proof}[Proof of Theorem~\ref{th:K2_f-la}]
    We use formula~\eqref{eq:K2_general} provided by Lemma~\ref{lem:3rdorder}.
    To reveal the precise dependence of the coefficients $\tilde A_1, \ldots, \tilde A_4$ on the diffusion coefficient $D$, we use Lemmas~\ref{lem:first_ord} and~\ref{lem:2ndorder}.

First, let us show that the first term  of \eqref{eq:K2_general} scales like $1/D(m_0)^2$.
Indeed,
\begin{align}\label{eq:denum_exp}
    &-Z\frac{R_0}{K_0^2}+\frac{Pm_0}{D(m_0)}\int_0^{R_0}\sigma_{11}(r) U(r)r\,dr  \nonumber
    \\
    &=\frac{1}{D(m_0)^2} \left( -\frac{ZR_0}{\hat K_0^2}+Pm_0 \int_0^{R_0}\hat\sigma_{11}(r)U(r)r\,dr\right)
   =
  \frac{1}{D(m_0)^2} A_0,
\end{align}
where $A_0$ is independent of $D(m_0)$. 

A similar computation for the second term in \eqref{eq:K2_general} shows that it can be expressed as
\begin{equation}\label{eq:num_exp}
   \tilde A_1 \frac{D''(m_0)}{D(m_0)^4} + \tilde A_2  \frac{D'(m_0)^2}{D(m_0)^5} +\tilde  A_3  \frac{D'(m_0)}{D(m_0)^4} +\tilde  A_4  \frac{1}{D(m_0)^3},
\end{equation}
    where 
    \begin{align}\label{eq:tildeA1}
        &\tilde A_1=-P \int_0^{R_0} \left(\frac{1}{2r} \int_0^r s^2 f_1(s)ds +\frac{r}{2}\int_{r}^{R_0}f_1(s)ds+\frac{r}{2R_0^2}\int_{0}^{R_0} s^2f_1(s)ds\right)U(r) r\,dr,
    \end{align}

    \begin{align}
        &\tilde A_2 =-P \int_0^{R_0} \left(\frac{1}{2r} \int_0^r s^2 f_2(s)ds +\frac{r}{2}\int_{r}^{R_0}f_2(s)ds+\frac{r}{2R_0^2}\int_{0}^{R_0} s^2f_2(s)ds\right)U(r) r\,dr,
    \end{align}

    \begin{align}
      &\tilde A_3 =-P \int_0^{R_0} \left(\frac{1}{2r} \int_0^r s^2 f_3(s)ds +\frac{r}{2}\int_{r}^{R_0}f_3(s)ds+\frac{r}{2R_0^2}\int_{0}^{R_0} s^2f_3(s)ds\right)U(r) r\,dr \nonumber\\
      & -\int_0^{R_0} P\!\left[
      \bigl(\rho_{20B}+ \frac{1}{2}\rho_{22B}\bigr) \left(\hat K_0 m_0 \hat \sigma''_{11}(R_0)-\hat m_{11}''(R_0)\right){+}\frac{\rho_{22B}}{R_0^2}\hat m_{11}(R_0)
      \right]r U(r) r\,dr \nonumber \\
      &\qquad \quad +Z R_0\left(\rho_{20B}+\frac{\rho_{22B}}{2}\right)\left(\hat \sigma_{11}''(R_0)-\frac{\alpha/R_0}{\hat K_0} \frac{J_1'(\alpha)}{J_1(\alpha)}\right)
    \end{align}
\begin{align}\label{eq:tildeA4}
    &\tilde A_4 =-P \int_0^{R_0} \left(\frac{1}{2r} \int_0^r s^2 f_4(s)ds +\frac{r}{2}\int_{r}^{R_0}f_4(s)ds+\frac{r}{2R_0^2}\int_{0}^{R_0} s^2f_4(s)ds\right)U(r) r\,dr \nonumber\\
      & -\int_0^{R_0} P\!\left[
      \bigl(\rho_{20A}+ \frac{1}{2}\rho_{22A}\bigr) \left(\hat K_0 m_0 \hat \sigma''_{11}(R_0)-\hat m_{11}''(R_0)\right){+}\frac{\rho_{22A}}{R_0^2}\hat m_{11}(R_0)
      \right]r U(r) r\,dr \nonumber \\
      &\qquad \quad +Z R_0\left(\rho_{20A}+\frac{\rho_{22A}}{2}\right)\left(\hat \sigma_{11}''(R_0)-\frac{\alpha/R_0}{\hat K_0} \frac{J_1'(\alpha)}{J_1(\alpha)}\right)
\end{align}

Finally, plugging the  expressions~\eqref{eq:num_exp},~\eqref{eq:denum_exp} into the formula~\eqref{eq:K2_general} for $K_2$ we derive the final formula~\eqref{eq:k2_fla_theorem} with 
\begin{equation}\label{eq:A_i_fla}
    A_1 = \frac{\tilde A_1}{A_0}, A_2 = \frac{\tilde A_2}{A_0}, A_3 = \frac{\tilde A_3}{A_0}, A_4 = \frac{\tilde A_4}{A_0}, 
\end{equation}
where $A_0, \tilde A_i$ are given by \eqref{eq:denum_exp}, \eqref{eq:tildeA1}--\eqref{eq:tildeA4}. 
\end{proof}

\section{Proof of Proposition \ref{thm:bif_and_tw_existence}: Existence and bifurcation of TWs }

Our proof is based on the classical theorem by Crandall and Rabinowitz (C.-R.), see~\cite{crandall1971bifurcation}. Recall that this theorem establishes the existence of bifurcation of family of solutions $x=x(K)$ for the equation $F(x, K) =0$ under several conditions on $F$. We divide the proof of Proposition \ref{thm:bif_and_tw_existence} into four steps.

\begin{proof}[Proof of Proposition~\ref{thm:bif_and_tw_existence}]
{\bf Step 1.} {\it Functional setting for C.-R. theorem for our free-boundary problem with nonlinear diffusion.} In this step we perform the change of coordinates that maps the moving domain $\Omega(t)$ with free boundary to the unit disk and we compute the operator of the problem \eqref{eq:1_2d}--\eqref{eq:5_2d} in these new coordinates. Finally, we construct the function $F(x, K)$ for our application of the C.-R. Theorem given by \eqref{eq:F_nonlinear} below. 

We first perform the change of coordinates that transforms the problem \eqref{eq:1_2d}--\eqref{eq:5_2d} in the domain $\Omega(t)$ with free boundary to the following {time-independent }problem in the unit disk {by using the traveling wave ansatz}
\begin{subequations}\label{eq:2d_fixed_TW}
\begin{alignat}{2}
  \label{eq:1_2d_fixed_TW}
  Z\tilde{\Delta}\sigma
  &= \sigma - Pm,
  &\qquad& 0\le\rho\le1,\; 0\le\theta<2\pi\\[4pt]
  \label{eq:2_2d_fixed_TW}
  -\,V e_{1}\cdot\tilde{\nabla}m
  &= \tilde{\operatorname{div}}\bigl(D(m)\,\tilde{\nabla}m - K m \tilde{\nabla}\sigma\bigr),
  &\qquad& 0\le\rho\le1,\; 0\le\theta<2\pi\\[4pt]
  \label{eq:3_2d_fixed_TW}
  N[R]\,m
  &= 0,
  &\qquad& \rho=1,\; 0\le\theta<2\pi\\[4pt]
  \label{eq:4_2d_fixed_TW}
  \sigma
  &\,=\, 1-|\Omega(0)|-\gamma H[R],
  &\qquad& \rho=1,\; 0\le\theta<2\pi\\[4pt]
  \label{eq:5_2d_fixed_TW}
  0
  &\,=\, K\,N[R]\sigma - V e_{1}\!\cdot\!\nu[R],
  &\qquad& \rho=1,\; 0\le\theta<2\pi
\end{alignat}
\end{subequations}
with an integral constraint

\begin{equation}\label{eq:integral_const}
  \int_B m w[R]\, dx=1.
\end{equation}

The technical details of this coordinate change are presented in Section~\ref{app:free_to_fixed}, where the operators $\tilde \Delta, \tilde \nabla, \tilde{\text{div}}, N[R], H[R], \nu[R], \text{ and } w[R]$ are given by \eqref{eq:tilde_grad}-\eqref{eq:h[r]}. In short, the coordinate change is the combination of shift {to fix the center of the domain} and a Hanzawa transform {to map the free-boundary domain to the fixed unit disk}.

{
In order to construct the operator $F$, { such that its linearization $F_x$} satisfies the Fredholm index-zero property in the C.-R. theorem, we modify the equation \eqref{eq:2_2d} by adding an additional unknown $c$ to the right-hand side

\begin{equation}\label{eq:2_2d_mod}
  -\,V e_{1}\cdot\tilde{\nabla}m = \tilde{\operatorname{div}}\bigl(D(m)\,\tilde{\nabla}m - K m \tilde{\nabla}\sigma\bigr) + c.
\end{equation}
{
The scalar \(c\) is an auxiliary unknown introduced to recover the Fredholm index-zero structure of the linearized operator.
}
The next lemma shows that the problem with the extra unknown constant $c$ is equivalent to the original one in the sense that all solutions to the modified problem have $c=0$ and satisfy the original problem \eqref{eq:1_2d_fixed_TW}--\eqref{eq:5_2d_fixed_TW},\eqref{eq:integral_const}. We will use this modified formulation to prove the existence of solutions using the C.-R.\ theorem.
\begin{lemma}\label{lem:lemma4}
  There are no solutions $(m,\sigma, R,V,c)$ to \eqref{eq:1_2d_fixed_TW},\eqref{eq:2_2d_mod},\eqref{eq:3_2d_fixed_TW}--\eqref{eq:5_2d_fixed_TW},\eqref{eq:integral_const} with $c\neq0$. 
\end{lemma}
\begin{proof}
  Consider a solution $(m,\sigma, R,V,c)$ to the problem \eqref{eq:1_2d_fixed_TW},\eqref{eq:2_2d_mod},\eqref{eq:3_2d_fixed_TW}--\eqref{eq:5_2d_fixed_TW},\eqref{eq:integral_const}. If we pull this solution back to the original free boundary domain $\Omega$, equation \eqref{eq:2_2d_mod} reads 
  \begin{equation*}
      -\,V e_{1}\cdot{\nabla}m = {\operatorname{div}}\bigl(D(m)\,{\nabla}m - K m {\nabla}\sigma\bigr) + c.
  \end{equation*}
Now integrating this equation over the domain $\Omega$, and using analogues of boundary conditions \eqref{eq:3_2d_fixed_TW},\eqref{eq:5_2d_fixed_TW} on the free boundary, we get
\begin{align}
  &\int_\Omega \,V e_{1}\cdot{\nabla}m + {\operatorname{div}}\bigl(D(m)\,{\nabla}m - K m {\nabla}\sigma\bigr)\, dx= \\
  &\int_\Omega \,V e_{1}\cdot{\nabla}m\, dx + \int_{\partial \Omega} D(m)\partial_\nu m - K m \partial_\nu \sigma\, dS = \\
  &\int_\Omega \,V e_{1}\cdot{\nabla}m\, dx + \int_{\partial \Omega} -K m \left(\frac{1}{K} V e_1 \cdot \nu\right)\, dS =0
.\end{align}
Therefore, for any solution, we have $c=0$. 
\end{proof}
}

We next introduce the operator $F(x,K)$ parametrized by the scalar parameter $K$ that acts on {$x = ( m(\rho, \theta), R(\theta), V, c) \in X := X_m \times X_R \times \R \times \R$} via
\begin{equation}\label{eq:F_nonlinear}
  F\left(\begin{pmatrix}  m\left(\rho, \theta\right)\\
   R\left(\theta\right) \\
   V \\
  c \end{pmatrix}, K \right) =
  \begin{pmatrix}
    V e_1 \cdot \tilde{\nabla}m + \tilde{\text{div}}(D(m) \tilde{\nabla} m) - K \tilde{\text{div}}(m \tilde{\nabla}\sigma) + c\\
    N[R]m \\
    \sigma\left(1,\theta\right) - 1 + \frac{1}{2} \int_{0}^{2\pi} R(\theta)^2 d\theta  + \gamma H[R] \\
    \int_B m w_R\,dx -1
   \end{pmatrix},
\end{equation}
where $\sigma = S({x}, K)$ is defined via the solution operator to
\begin{subequations}
\begin{numcases}{}
    Z\tilde{\Delta}\sigma
  \,=\, \sigma - Pm,
  \hfill 0\le\rho\le1, 0\le\theta<2\pi \\
  0
  \,=\, K\,N[R]\sigma - V e_{1}\!\cdot\!\nu[R],
  \quad \hfill \rho=1, 0\le\theta<2\pi
\end{numcases}
\end{subequations}
We refer to \eqref{eq:Domain_space} in Section \ref{app:free_to_fixed} for the precise functional setting.  

Now our problem \eqref{eq:1_2d_fixed_TW},\eqref{eq:3_2d_fixed_TW}--\eqref{eq:5_2d_fixed_TW} can be written in the form $F(x, K)=0$ and we will next verify the conditions of the C.-R. theorem for the operator $F$
given by \eqref{eq:F_nonlinear}. The C.-R. theorem guarantees the existence of the bifurcation of TW solutions for small $V$ in a neighborhood of the trivial solution $x=x_0$ provided that
\begin{enumerate}[label=(\roman*)]
	\item \label{item:CR1} $F(x_0,K) = 0$ for all $K$ in a neighborhood of $K_0$.
    \item \label{item:CR2}$F_x, F_K, F_{x,K}$ exist and are continuous in a neighborhood of $(x_0,K_0)$.
    \item \label{item:CR3}$\mathrm{dim}( \ker F_x(x_0, K_0)) = 1$, i.e., there exists a simple zero eigenvector $x_1$ s.t.\ $F_x(x_0;K_0) x_1= 0$, and $\mathrm{codim}(\mathrm{Range}(F_x(x_0, K_0)))=1$.
    \item \label{item:CR4}$F_{x,K} (x_0, K_0) x_1 \not\in \mathrm{Range}(F_x(x_0;K_0))$.
\end{enumerate}
Condition~\ref{item:CR1} defines $x_0$ as a trivial solution. The first condition in~\ref{item:CR3} ensures the existence of a simple zero eigenvalue of the linearized operator $F_x$ and the second condition in~\ref{item:CR3} shows that the operator $F_x$ is Fredholm with index 0. 
The transversality condition \ref{item:CR4} ensures that the new nontrivial branch of solutions is non-tangential to the trivial branch.

Finally, we note  that condition~\ref{item:CR1} is easily verified, as the stationary solution \eqref{eq:steady_state} provides the trivial solution of \eqref{eq:F_nonlinear} given by
$
    x_0 = (
       m_0,
       R_0,
        0,
       0)
$.
Condition~\ref{item:CR2} is easily checked due to our regularity assumption on $D$.
In fact, we have $F\in C^3$ in a neighborhood of $(x_0,K_0)$, which will allow us to gain the additional regularity in $V$ stated in the Proposition. { The crucial aspect is that one needs to make sure that $R$ is uniformly bounded away from zero, which is guaranteed by restricting to a small neighborhood of $R_0$ in a suitable Sobolev space. Then, for such $R$, the mapping of the problem to the fixed domain does not have singularities and the map is sufficiently smooth. For the convenience of the reader, we provide details in Section~\ref{app:free_to_fixed}.}
We note that the nonlinear diffusion coefficient $D(m)$ causes significant differences in the verification of conditions~\ref{item:CR3} and~\ref{item:CR4} compared to the linear diffusion case, which we present in the next three steps.

{\bf Step 2. }{\it Establishing simple zero eigenvalue condition in~\ref{item:CR3}.}

To check the simple zero eigenvalue property, we start by computing the Frechet derivative of $F$ at the bifurcation point
\begin{equation}\label{eq:f_linearizeation}
    F_x\left(x_0, K_0 \right)\begin{pmatrix} m\left(\rho, \theta\right)\\ R\left(\theta\right) \\
       V \\
      c  \end{pmatrix}
    = 
    \begin{pmatrix} 
    D(m_0)\dfrac{1}{R_0^2}\Delta_{(\rho,\theta)} m - K_0 m_0 \dfrac{1}{R_0^2}\Delta_{(\rho,\theta)} \sigma +c \\
  \dfrac{1}{R_0} m_\rho\left(1, \theta\right)\\
  \sigma\left(1, \theta \right) +R_0 \int_{0}^{2\pi}R(\theta)\, d \theta-\gamma \dfrac{R(\theta)+R''(\theta)}{R_0^2}\\
R_{0}^{2}\int_B m \, dx+ m_0R_0\int_{0}^{2\pi} R(\theta)\,d\theta 
 \end{pmatrix} 
  ,\end{equation}
where $\sigma = S_x({x}, K)$ is a solution to
\begin{subequations}\label{eq:S_op}
\begin{numcases}{}
    \frac{Z}{R_0^2}{\Delta_{(\rho, \theta)}}\sigma
  \,=\, \sigma - Pm,
  \hfill \quad 0\le\rho\le1,  0\le\theta<2\pi \\
 \dfrac{K_0}{R_0} \sigma_\rho\left(1, \theta\right) =  V \cos \theta. 
\end{numcases}
\end{subequations}
and $\Delta_{(\rho,\theta)} u =  u_{\rho\rho}
+ \dfrac{u_{\rho}}{\rho}
+\dfrac{u_{\theta\theta}}{\rho^2} $ denotes the Laplace operator in polar coordinates. 
Thus we need to show the existence of a unique (up to a constant factor) solution
\begin{equation}\label{eq:x_form}
  x_1 = (
    m_1, R_1, V_1, c_1)^T
\end{equation} of 
\begin{subequations}\label{eq:simple_ev_sys}
    \begin{numcases}{}
         D(m_0)\Delta_{(\rho,\theta)} m_1 - K_0 m_0 \Delta_{(\rho,\theta)} \sigma_1 +R_0^2 c_1 =0 \label{eq:zero_ev_a}\\
                  \frac{Z}{R_0^2}{\Delta_{(\rho, \theta)}}\sigma_1 = \sigma_1 - Pm_1,\label{eq:zero_ev_b}\\
         m_{1,\rho}(1, \theta) = 0 \label{eq:zero_ev_c}\\
         \sigma_1(1,\theta)  +R_0 \int_{0}^{2\pi}R_1(\theta)\, d \theta-\gamma \dfrac{R_1(\theta)+R_1''(\theta)}{R_0^2} =0\label{eq:zero_ev_d} \\
         R_{0}^{2}\int_B m_1\, dx + m_0R_0\int_{0}^{2\pi} R_1(\theta)\,d\theta  =0 \label{eq:zero_ev_e}\\
 \dfrac{K_0}{R_0} \sigma_{1,\rho}\left(1, \theta\right) =  V_1 \cos \theta. \label{eq:zero_ev_f}
    \end{numcases}
\end{subequations}

First, we show that $c_1=0.$ As in the proof of Lemma~\ref{lem:lemma4}, we integrate by parts and get 
\begin{align}
  c_1 &= -\frac{1}{\pi}\left(D(m_0) \int_B \Delta m_1\, dx - K_0m_0 \int_B \Delta \sigma_1 \, dx\right) \\
  &=-\frac{1}{\pi} \left(D(m_0) \int_{\partial B} m_{1,\rho}(1, \theta)\,d \theta - K_0 m_0 \int_{\partial B} \sigma_{1,\rho}(1,\theta)\,d\theta\right) \\
  &= -\frac{1}{\pi}\left(-K_0m_0 \int_{\partial B} \frac{R_0}{K_0}V_1 \cos \theta\,d \theta\right) =0
.\end{align}

Now, to solve $m_1$ in terms of $\sigma_1$, introduce a new variable 
\[
u_1 := D(m_0) m_1 - K_0 m_0 \sigma_1.
\] 
From~\eqref{eq:zero_ev_a},\eqref{eq:zero_ev_c},\eqref{eq:zero_ev_f}, $u_1$ solves
\begin{subequations}\label{eq:u_prob}
  \begin{numcases}{}
    \Delta_{(\rho,\theta)} u_1 =0 \\
    u_{1,\rho}(1,\theta) =-m_0R_0V_1 \cos \theta
  \end{numcases}
\end{subequations}
Then, the solution to~\eqref{eq:u_prob} is given by
\begin{equation}
  u_1 = -m_0R_0V_1 \rho \cos \theta + C,\ C \in \R.
\end{equation}
This allows to solve $m_1$ in terms of $\sigma_1$ and obtain the following problem for $\sigma_1, R_1, V_1, C$

\begin{subequations}\label{eq:sRVC}
  \begin{numcases}{}
    \frac{Z}{R_0^2} \Delta_{(\rho, \theta)} \sigma_1 +\left(\frac{K_0Pm_0}{D(m_0)}-1\right) \sigma_1 = \frac{P m_0 R_0}{D(m_0)}V_1 \rho \cos \theta +C \\
 \sigma_1(1,\theta)  +R_0 \int_{0}^{2\pi}R_1(\theta)\, d \theta-\gamma \dfrac{R_1(\theta)+R_1''(\theta)}{R_0^2} =0 \\
    R_{0}^{2} \int_B \left(\frac{K_0 m_0}{D(m_0)} \sigma_1 + \frac{C}{D(m_0)}\right) \, dx+  \int_{0}^{2\pi} R_1(\theta)\,d\theta  =0 \\
    \dfrac{K_0}{R_0} \sigma_{1,\rho}\left(1, \theta\right) =  V_1 \cos \theta.
  \end{numcases}
\end{subequations}
We look for a symmetrical around x-axis solutions of \eqref{eq:sRVC} in the form of Fourier series
\begin{align}
    \sigma_1(\rho, \theta) &= S_0(\rho) + \sum_{n=1}^\infty S_n(\rho) \cos(n \theta),\\
    R_1(\theta) & = R_{1,0} + \sum_{n=2}^\infty R_{1,n} \cos(n \theta),
\end{align}
where all coefficients $S_n(\rho)$ ($n\geq 1$) vanish at zero and $S_0'(0) =0$. 
Note that because we excluded the shifts,
there is no $\cos \theta$ term in the expansion of $R_1$.

For $n=0$ we get the system for $S_0, R_{1,0}, C$
\begin{subequations}
\begin{numcases}{}
\frac{Z}{R_0^2} \Big(S_0'' {+}\frac{1}{\rho}S_0'\Big) + \left(\frac{K_0Pm_0}{D(m_0)}-1\right) S_0 =C, \quad 0< \rho < 1 \label{eq:S0a}\\
 S_0'(1) =0 \label{eq:S0b}\\
 S_0'(0) =0\label{eq:S0c}\\
S_0(1) + 2\pi R_0 R_{1,0} - \frac{\gamma}{R_0^2} R_{1,0} =0\label{eq:S0d}\\ 
2 \pi R_0^2 \frac{K_0m_0}{D(m_0)} \int_0^1 S_0(\rho) \rho\,d\rho + \pi \frac{R_0^2}{D(m_0)} C +  2\pi R_{1,0} =0 \label{eq:S0e}
\end{numcases}
\end{subequations}
From~\eqref{eq:S0a}, \eqref{eq:S0b}, \eqref{eq:S0c} we obtain $S_0=\left(\frac{K_0Pm_0}{D(m_0)}-1\right)^{-1}C$, which after substituting in~\eqref{eq:S0d} allows one to find $R_{1,0} =-\left(2\pi R_0 - \frac{\gamma}{R_0^2}\right)^{-1}\left(\frac{K_0Pm_0}{D(m_0)}-1\right)^{-1}C$. However, substituting this into~\eqref{eq:S0e}, we get that $C$ must be equal to zero, and therefore $S_0=R_{1,0}=0.$

For each $n \geq 2$, we obtain
\begin{subequations}\label{eq:cos_n}
\begin{numcases}{}
\frac{Z}{R_0^2} \Big(S_n'' {+}\frac{1}{\rho}S_n' {-} \frac{n^2}{\rho^2} S_n\Big)  + \left(\frac{K_0Pm_0}{D(m_0)}-1\right) S_n =0, \quad 0< \rho < 1 \label{eq:cos_n_1}\\
 S_n'(1) =0 \\
 { S_n(0) = 0 } \label{eq:cos_n_5} \\
 S_n(1) - \gamma \frac{(1-n^2)R_{1,n}}{R_0^2} =0 \label{eq:cos_n_R}
\end{numcases}
\end{subequations}
{For $n\geq 2$, the solution to system~\eqref{eq:cos_n_1},\eqref{eq:cos_n_5} is given by $S_n(\rho) = C J_n(\alpha \rho)$, and using condition~\eqref{eq:non-resonance}, we obtain $C=0$. From which, using~\eqref{eq:cos_n_R} it follows that $\cos(n\theta)$ modes admit only the constant zero solution
\begin{equation*}
    R_n= S_n= 0, n \geq 2.
\end{equation*}
}
Therefore, any zero eigenfunction has the form
\begin{equation}\label{eq:simple_ev}
   \left(m_1, R_1, V_1, c_1\right)=\left(\left(\frac{K_0m_0}{D(m_0)}S_1(\rho)-\frac{m_0R_0}{D(m_0)}V_1 \rho\right)\cos\theta, 0, V_1, 0\right),
\end{equation}
where $S_1(\rho)$ solves
\begin{subequations}\label{eq:32}
    \begin{numcases}{}
        \frac{Z}{R_0^2} (S_1'' +\frac{1}{\rho}S_1' - \frac{1}{\rho^2} S_1) + \left(\dfrac{P K_0 m_0}{D(m_0)} -1 \right)S_1 = \dfrac{P R_0 m_0}{D(m_0)}V_1\rho, \quad 0< \rho < 1\label{eq:32a}\\
        \frac{K_0}{R_0}S_1'(1) = V_1 \\
        { S_1(0) = 0}\label{eq:32c} \\
        S_1(1) = 0 \label{eq:32d}
    \end{numcases}
\end{subequations}
Now there is a unique solution to \eqref{eq:32a}-\eqref{eq:32c}, which is given by
\begin{equation}
S_{1}(\rho)=
\frac{R_{0}}{P\,K_{0}\,m_{0}-D(m_{0})}
\left(
P\,m_{0}\,\rho
-\frac{D(m_{0})}{K_{0}}\,
\frac{J_{1}\bigl(\alpha\rho\bigr)}{\alpha\,J_{1}'(\alpha)}
\right)V_1,
\label{eq:S1_solution}
\end{equation}
where $\alpha$ is given by
\begin{equation}
\alpha=\frac{R_{0}}{\sqrt{Z}}\,
\sqrt{\frac{P\,K_{0}\,m_{0}}{D(m_{0})}-1}
\end{equation}
and  $J_1$ is the Bessel function of first kind. This solution also satisfies \eqref{eq:32d} provided that $K_0$ satisfies the transcendental equation~\eqref{eq:Bif_cond}.

Thus, we have shown that the eigenvalue $0$ is simple and that the eigenvector corresponding to it is given by~\eqref{eq:simple_ev} with $S_1(\rho)$ given by \eqref{eq:S1_solution}.

{\bf Step 3. }{\it Fredholm and zero-index property in~\ref{item:CR3}}
 {
In Step 2 we showed that $\mathrm{dim}(\mathrm{Ker}(F_x(x_0, K_0))) = 1$. 
Now we will prove that our operator has index zero.
To this end, first consider the $\sigma-$operator. A solution operator $S_x$ to problem~\eqref{eq:S_op} from the classic elliptic theory is a compact operator from $X_m \times \R \times X_R \to H^2_{sym}(B)$. 
Now, a Laplacian is a bounded operator from $H^2_{sym}(B) \to L^2_{sym}(B)$, and therefore $\Delta \sigma(m, V, R)$ is a compact operator from $X_m \times \R \times X_R$ to $L^2_{sym}(B)$ as a composition of bounded and compact operators.
 Moreover, the trace of the $H^2(B)$ function is compact operator into $L^2(\partial B)$. In the light of the reasoning above, we can rewrite $F_x(x_0, K_0)$ as 
\begin{equation}
    F_x\left(x_0, K_0 \right)
    = 
   \begin{pmatrix} 
    D(m_0)\dfrac{1}{R_0^2}\Delta_{(\rho,\theta)} m +c \\  
  \dfrac{1}{R_0}\hat m_\rho\left(1, \theta\right)\\
  R_0 \int_{0}^{2\pi}R(\theta)\, d \theta-\gamma \dfrac{R(\theta)+R''(\theta)}{R_0^2}\\
R_{0}^{2}\int_B m\, dx + m_0R_0\int_{0}^{2\pi} R(\theta)\,d\theta 
 \end{pmatrix} + \begin{pmatrix}-  \dfrac{K_0 m_0}{R_0^2}\Delta_{(\rho,\theta)} \sigma \\
0 \\
\sigma\left(1, \theta \right) \\
0
  \end{pmatrix} =: T+K
  ,\end{equation}
where the second term $K$ is a compact operator from $X$ to $L^2_{sym}(B) \times L^2_{sym}(\partial B) \times  L^2_{sym}(\partial B) \times \R$. Therefore, as the compact perturbation does not change the index of an operator,
\[
\mathrm{ind}(F_x(x_0, K_0)) = \mathrm{ind}\left(T\right).
\]
Now, it is easy to compute directly both the $\mathrm{ker}(T)$ and $\mathrm{ran}(T)$. 
Indeed, to find the kernel, we need to solve the following system for $m, R, V, c$
\begin{subequations}
  \begin{numcases}{}
    D(m_0)\dfrac{1}{R_0^2}\Delta_{(\rho,\theta)} m +c = 0, \label{eq:70a}\\
     m_\rho\left(1, \theta\right) =0, \label{eq:70b}\\
     R_0 \int_{0}^{2\pi}R(\theta)\, d \theta-\gamma \dfrac{R(\theta)+R''(\theta)}{R_0^2} =0, \label{eq:70c}\\
     R_{0}^{2}\int_B m\, dx + m_0R_0\int_{0}^{2\pi} R(\theta)\,d\theta =0. \label{eq:70d}
  \end{numcases}
\end{subequations}
First, note that from integrating~\eqref{eq:70a} over the ball, we see that $c=0$. From~\eqref{eq:70c},\eqref{eq:70d} using Fourier analysis it is easy to see that $R=0$, and therefore from~\eqref{eq:70d} $\int_B m\, dx =0$. Using~\eqref{eq:70a},\eqref{eq:70b} we get that $m$ must be a constant and in the light of $\int_B m\, dx=0$, it must be zero as well. Thus, the kernel is one-dimensional and only consists of $\left(0,0,V,0\right), V \in \R$.

To find the $\mathrm{ran}(T)$, we find the necessary and sufficient conditions for an existence of a solution $\left(m, R, V,c\right)$ to 
\begin{subequations}
  \begin{numcases}{}
    D(m_0)\dfrac{1}{R_0^2}\Delta_{(\rho,\theta)} m +c = f(\rho,\theta), \label{eq:71a}\\
     m_\rho\left(1, \theta\right) =g(\theta), \label{eq:71b}\\
     R_0 \int_{0}^{2\pi}R(\theta)\, d \theta-\gamma \dfrac{R(\theta)+R''(\theta)}{R_0^2} =a(\theta), \label{eq:71c}\\
     R_{0}^{2}\int_B m\, dx + m_0R_0\int_{0}^{2\pi} R(\theta)\,d\theta =b. \label{eq:71d}
  \end{numcases}
\end{subequations}
First, one can find $R\in X_R$ using Fourier series from~\eqref{eq:71c}, provided that \[\int_0^{2\pi} a(\theta) \cos\theta\, d\theta=0.\] Now from compatibility conditions between~\eqref{eq:71a} and~\eqref{eq:71b} we must take 
\[c=\int_{\partial B} g\,dS-\int_B f\,dx.\]
Then, there is a unique, up to an additive constant, solution $m$ of the BVP~\eqref{eq:71a},\eqref{eq:71b} and this constant can be found using the integral condition~\eqref{eq:71d}. Therefore, the $\mathrm{codim}\left(\mathrm{ran}(T)\right)=1$. 

Therefore, the index of the operator $F_x\left(x_0, K_0\right)$ is equal to the index of $T$ and is equal to zero. 
 }

{\bf Step 4.} {\it Establishing the transversality condition~\ref{item:CR4}.}  
We claim that our non-degeneracy condition on the physical parameters \eqref{eq:tranversality_cond}
implies the transversality condition~\ref{item:CR4}.

To this end, we compute the second derivative of the operator $F$ defined in \eqref{eq:F_nonlinear}:
\begin{equation}  
F_{K,x}(x_0, K_0) \begin{pmatrix}
    m \\ 
    R \\
    V \\
    c
\end{pmatrix} = \begin{pmatrix}
    -\frac{m_0}{R_0^2}\Delta \sigma  - \frac{K_0 m_0}{R_0^2}\Delta \dot \sigma\\
    0 \\
    \dot\sigma(1,\theta) \\
    0 \\
\end{pmatrix},
\end{equation}
where $\dot \sigma$ is a solution to
\begin{subequations}
\begin{numcases}{}
\dfrac{Z}{R_0^2}\Delta\dot\sigma=\dot\sigma \quad \text{in }B,\\
\dfrac{K_0}{R_0}\,\partial_\rho\dot\sigma(1,\theta)=-\dfrac{1}{R_0}\,\partial_\rho\sigma(1,\theta).
\end{numcases}
\end{subequations}
We then directly show that $F_{K,x}(x_0, K_0) x_1 \not\in \mathrm{Range}(F_x(x_0,K_0))$ by proving that there are no solutions $x \in X$ to the linear system
\begin{subequations}\label{eq:41}
    \begin{numcases}{}
\dfrac{Z}{R_{0}^2} \Delta_{(\rho,\theta)}\sigma - \sigma + P \hat m =0 \label{eq:41_a}\\
    D(m_0)\dfrac{1}{R_0^2}\Delta_{(\rho,\theta)} m - K_0 m_0 \dfrac{1}{R_0^2} \Delta_{(\rho,\theta)} \sigma  
    =-\frac{m_0}{R_0^2} \Delta_{(\rho,\theta)} \sigma_1 - \frac{K_0 m_0}{R_0^2}\Delta \dot \sigma_1\\
  \dfrac{1}{R_0}\hat m_\rho\left(1, \theta\right) =0\\
  \sigma\left(1, \theta \right) +R_0 \int_{0}^{2\pi}R(\theta)\, d \theta-\gamma \dfrac{R(\theta)+R''(\theta)}{R_0^2} =\dot\sigma_1(1,\theta)\\
 \dfrac{K_0}{R_0} \sigma_\rho\left(1, \theta\right) =   V \cos \theta\label{eq:41_e}   .   
    \end{numcases}
\end{subequations}
Again, we utilize Fourier analysis and as the only non-zero mode of $\sigma_1$, and therefore $\dot \sigma_1$  is the $\cos \theta$ mode, we repeat the argument from Step 2 and see that if the solution to \eqref{eq:41} exists it must be of the form
\begin{equation}
    \sigma(\rho, \theta) = \hat S_1(\rho) \cos \theta, m(\rho, \theta) = \hat M_1(\rho) \cos \theta
\end{equation}
Therefore, system \eqref{eq:41} is equivalent to the system for $\hat S_1(\rho), \hat M_1(\rho)$
\begin{subequations}
    \begin{numcases}{}
        \frac{Z}{R_0^2} \Big(\hat S_1'' +\frac{1}{\rho}\hat S_1' - \frac{1}{\rho^2} \hat S_1\Big) -\hat  S_1 + P \hat M_1 =0, \hfill 0< \rho < 1\label{eq:44_a}\\
\dfrac{ D(m_0)}{R_0^2}\Big(\hat M_1'' {+} \frac{1}{\rho}\hat M_1' {-} \frac{1}{\rho^2} \hat M_1\Big) - \dfrac{ K_0 m_0}{R_0^2}\Big(\hat S_1'' {+} \frac{1}{\rho}\hat S_1' {-} \frac{1}{\rho^2} \hat S_1\Big)\nonumber \\
 =  \dfrac{m_0}{R_0^2}\Big( S_1'' {+} \frac{1}{\rho} S_1' {-} \frac{1}{\rho^2} S_1\Big) - \dfrac{K_0m_0}{R_0^2} \Big( \dot S_1'' {+} \frac{1}{\rho} \dot S_1' {-} \frac{1}{\rho^2} \dot S_1\Big), \hfill 0 < \rho < 1 \label{eq:44_b} \\
 \hat M_1'(1) =0\label{eq:44_c} \\
\hat  S_1 (1) = \dot S_1(1) \\
\frac{K_0}{R_0}\hat S_1'(1) = V \\
 { \hat M_1(0) =0 }\\
 {\hat S_1(0) = 0}.\label{eq:44_g}
    \end{numcases}
\end{subequations}

Rearranging terms in \eqref{eq:44_a}-\eqref{eq:44_g}, changing variables according to

\[
\tilde S_1(\rho)= K_0\hat S_1(\rho) - S_1(\rho) - K_0 \dot S_1(\rho),
\]

and using \eqref{eq:32c},\eqref{eq:32d} the system becomes
\begin{subequations}
    \begin{numcases}{}
        \frac{Z}{R_0^2} \Big(\tilde S_1'' {+}\frac{1}{\rho}\tilde S_1' {-} \frac{1}{\rho^2} \tilde S_1\Big) -\tilde  S_1 + P K_0 \hat M_1 = \nonumber \\
        \quad \quad =\frac{Z}{R_0^2} \Big(S_1'' {+}\frac{1}{\rho} S_1' {-} \frac{1}{\rho^2} S_1\Big) -  S_1, \hfill\quad 0{<} \rho {<} 1\label{eq:45_a}\\
 D(m_0)\dfrac{1}{R_0^2}\Big(\hat M_1'' {+} \frac{1}{\rho}\hat M_1' {-} \frac{1}{\rho^2} \hat M_1\Big) - m_0 \dfrac{1}{R_0^2}\Big(\tilde S_1'' {+} \frac{1}{\rho}\tilde S_1' {-} \frac{1}{\rho^2} \tilde S_1\Big) = 0, \hfill \quad 0 {<} \rho {<} 1 \label{eq:45_b} \\
 \hat M_1'(1) =0\label{eq:45_c} \\
\tilde  S_1 (1) = 0 \label{eq:45_d}\\
 \tilde S_1'(1) = R_0 V \label{eq:45_e}\\
 { \hat M_1(0) =0 }\label{eq:45_f}\\
 {\tilde S_1(0) = 0.}\label{eq:45_g}
    \end{numcases}
\end{subequations}
Using \eqref{eq:45_b}, we can express $\hat M_1$ in terms of $\tilde S_1$:
\begin{equation}\label{eq:hat_m_in_s}
    \hat M_1 = \frac{m_0}{D(m_0)} \tilde S_1 + C_1 \rho + C_2 \frac1\rho;
\end{equation}
and after substituting in boundary conditions \eqref{eq:45_f}, \eqref{eq:45_g} it follows $C_2=0$. From boundary conditions \eqref{eq:45_c}, \eqref{eq:45_e} it follows that $C_1 = -R_0 V m_0 / D(m_0)$. Substituting \eqref{eq:hat_m_in_s} in the system \eqref{eq:45_a}-\eqref{eq:45_g}, it reduces to 
\begin{subequations}
    \begin{numcases}{}
         \frac{Z}{R_0^2} (\tilde S_1'' +\frac{1}{\rho}\tilde S_1' - \frac{1}{\rho^2} \tilde S_1) + \left(\frac{P K_0 m_0}{D(m_0)}-1\right) \tilde S_1  \nonumber\\ = \frac{Z}{R_0^2} (S_1'' +\frac{1}{\rho} S_1' - \frac{1}{\rho^2} S_1) -  S_1 + \frac{PK_0 R_0 m_0V}{D(m_0)}\rho , \hfill 0< \rho < 1 \label{eq:47a}\\
         \tilde S_1(0) =0 \\
         \tilde S_1(1) =0  \\
         \tilde S_1'(1) = R_0V.
    \end{numcases}
\end{subequations}
Now, we can simplify the right-hand side of \eqref{eq:47a} using the PDE \eqref{eq:32a} and the solution formula \eqref{eq:S1_solution}
\begin{align*}
&\frac{Z}{R_0^2} (S_1'' +\frac{1}{\rho} S_1' - \frac{1}{\rho^2} S_1) -  S_1 + \frac{PK_0 R_0 m_0V}{D(m_0)}\rho 
\\&=  \frac{PR_0m_0}{D(m_0)} \rho -\frac{P K_0 m_0}{D(m_0)} S_1 + \frac{PK_0 R_0 m_0V}{D(m_0)} \rho 
\\& =\left(\frac{PR_0m_0}{D(m_0)} + \frac{PK_0 R_0 m_0V}{D(m_0)} - \frac{P^2K_0m_0^2R_0}{D(m_0)(PK_0m_0-D(m_0))}\right)\rho 
\\&\qquad+ \frac{PR_0m_0}{(PK_0m_0 - D(m_0))\alpha J_1'(\alpha)}{J_1(\alpha \rho)}
\\&=:A(V) \rho + B J_1(\alpha \rho).
\end{align*}

Finally, using \eqref{eq:32a} the right-hand side of \eqref{eq:47a} can be simplified and we derive the final version of the ODE system for $(\tilde S_1(\rho), V)$
\begin{subequations}
    \begin{numcases}{}
         \frac{Z}{R_0^2} \Big(\tilde S_1'' {+}\frac{1}{\rho}\tilde S_1' {-} \frac{1}{\rho^2} \tilde S_1\Big) + \left(\frac{P K_0 m_0}{D(m_0)}-1\right) \tilde S_1  =A(V)\rho + B {J_1(\alpha \rho)}\label{eq:48a}\\
         \tilde S_1(0) =0 \label{eq:48b} \\
         \tilde S_1(1) =0 \label{eq:48c} \\
         \tilde S_1'(1) = R_0V. \label{eq:48d}
    \end{numcases}
\end{subequations}

For any $V\in \R$, the solution to \eqref{eq:48a}--\eqref{eq:48c} is given by
\begin{align}\label{eq:62ab_soln}
    \tilde S_1(\rho) = &\frac{A(V)}{\frac{PK_0m_0}{D(m_0)}-1}\rho -\frac{A(V)}{\frac{PK_0m_0}{D(m_0)}-1}\frac{J_1(\alpha \rho)}{ J_1(\alpha)}  \nonumber\\
    +&\frac{\pi}{2}\frac{R_0^2}{Z}B \left(- J_1(\alpha \rho)\int_0^\rho s Y_1(\alpha s) J_1(\alpha s) ds +Y_1(\alpha \rho) \int_0^\rho s J_1(\alpha s) J_1(\alpha s)ds\right) \nonumber \\
    -&\frac{\pi}{2}\frac{R_0^2}{Z}B \left(- J_1(\alpha)\int_0^1 s Y_1(\alpha s) J_1(\alpha s) ds +Y_1(\alpha) \int_0^1 s J_1(\alpha s)^2ds\right) \frac{J_1(\alpha \rho)}{ J_1(\alpha)}.
\end{align}
Now we must show that  \eqref{eq:62ab_soln} does not satisfy the extra boundary condition \eqref{eq:48d}. Computing the derivative of $\tilde S_1(\rho)$ at $\rho=1$ and simplifying the terms yields
\begin{align}\label{eq:s1p}
    \tilde S_1'(1) 
    &=R_0V + \frac{D(m_0) R_0}{K_0(PK_0m_0-D(m_0))}\left(-1 +\frac{R_0^2}{Z} \frac{1}{\alpha J_1'(\alpha)} \int_0^1 s J_1(\alpha s)^2\,ds\right)
\end{align}
which -- thanks to our non-degeneracy condition~\eqref{eq:tranversality_cond} -- contradicts~\eqref{eq:48d}.
This contradiction proves the transversality condition~\ref{item:CR4}.

Thus we have verified all conditions~\ref{item:CR1}--\ref{item:CR4} of the Crandall-Rabinowitz theorem and the Proposition~\ref{thm:bif_and_tw_existence} is proven. 
\end{proof}

\section{Change of coordinates to fixed boundary}\label{app:free_to_fixed}

\subsection{Reformulation of the problem on the unit disk}
The original system of the equations \eqref{eq:1_2d}-\eqref{eq:5_2d} is posed on the moving domain $\Omega(t)$ with the free boundary. 
To develop the proper functional setting for the Crandall-Rabinowitz theorem we map the problem to a fixed domain. We do it in two steps. First, we move the domain such that the center of mass of the cell is fixed. After that we can parametrize the boundary in polar coordinates and use the so-called Hanzawa transform to map the problem to the unit disk.

The center of mass of the domain $\Omega(t)$ is given by 
\begin{equation}
    \mathbf c(t) = \frac1{\lvert \Omega(t) \rvert }\int_{\Omega(t)} x \, dx.
\end{equation}
Without loss of generality, we can assume that the traveling wave solution moves in the direction of the $x_1$-axis and from the symmetry of the cell we can assume that the center of the cell is always on the $x_1$-axis. To this end, we can rewrite
\begin{equation}
  \mathbf c(t) = c_1(t)  \mathbf e_{1} = \dfrac{\int_{\Omega(t)}x_1 dx}{|\Omega(t)|}\mathbf e_{1}.
\end{equation}

After the shift of the coordinates as well as the domain

\begin{equation}\label{eq:shift_of_coord}
\begin{aligned}
x &\mapsto x- \mathbf c(t),\\
\tilde\Omega(t) &:= \Omega(t) - \mathbf{c}(t)
           = \{\, \mathbf{y}-\mathbf{c}(t) : \mathbf{y}\in\Omega(t) \,\},
\end{aligned}
\end{equation}
the system \eqref{eq:1_2d}--\eqref{eq:5_2d} becomes

\begin{align}
    \label{eq:1_2d_cm}
    Z\Delta \sigma &= \sigma - Pm, \quad  &&(r, \theta) \in \tilde \Omega(t)\\
    \label{eq:2_2d_cm}
    \partial_t m - \frac{dc_1}{dt} \mathbf e_{1} \cdot \nabla m &= \text{div}\,(D(m) \nabla m - K m \nabla \sigma), \quad &&(r, \theta) \in \tilde \Omega(t)\\
    \label{eq:3_2d_cm}
    \partial_\nu m(x) &= 0, \quad &&(r, \theta) \in \partial\tilde \Omega(t)\\
    \label{eq:4_2d_cm}
    \sigma(x) &= 1-|\Omega(t)|- \gamma H, \quad  &&(r, \theta) \in \partial\tilde\Omega(t)\\
    \label{eq:5_2d_cm}
    K \partial_\nu \sigma (x) &=(V + \frac{dc_1}{dt}  \mathbf e_1) \cdot \nu, \quad&& x \in \partial \tilde\Omega(t).
\end{align}

Now we can parametrize {in polar coordinates} the boundary {\[\partial \tilde \Omega(t) = \{(R(\theta, t),\theta), \theta \in [0, 2\pi]\},\]}
where $R(\theta, t)$ is the radial distance from the center (now fixed at $0$) to the boundary at angle $\theta$. $V = \partial_t R(\theta, t)$ represents the radial velocity in \eqref{eq:5_2d_cm}. We compute $\frac{dc_1}{dt}$ in new coordinates:
\begin{align*}
    \frac{dc_1}{dt} 
    &= \frac{d}{dt} \frac{1}{|\Omega(t)|} \int_{\Omega(t)} x_1\,dx 
    = \frac{1}{|\Omega(t)|}\int_{\partial \Omega(t)} x_1 V_\nu \,ds - \frac{1}{|\Omega(t)|^2}\int_{\Omega(t)}x_1\,dx \int_{\partial \Omega(t)}V_\nu\, ds 
    \\&= 
    \frac{1}{|\Omega(t)|} \int_{\partial \Omega(t)} (x_1-c) V_{\nu}\, ds = \frac{1}{|\tilde \Omega(t)|} \int_{\partial \tilde \Omega(t)} x_1 V_\nu\, ds = \frac{K}{|\tilde \Omega(t)|} \int_{\partial \tilde \Omega(t)} x_1 \partial_\nu \sigma\, ds 
    \\&= 
    \frac{K}{|\tilde \Omega(t)|} \int_0^{2\pi} R(\theta,t)\cos \theta \partial_{\nu} {\sigma} \sqrt{ R(\theta,t)^2 + R_{\theta}(\theta,t)^2 }\, d\theta.
\end{align*}
This allows to derive the following equation for $\partial_t R(\theta, t)$
\begin{equation}
    \frac{R(\theta, t)}{\sqrt{R_\theta(\theta,t)^2 + R(\theta,t)^2}}\partial_t R(\theta, t) =  K \partial_\nu \sigma - \frac{dc_1}{dt} \mathbf e_1 \cdot \nu 
\end{equation}

Finally, we map our problem to the unit ball via the Hanzawa transform
\begin{equation}\label{eq:Hansawa}
    r(\rho, \theta, t) = R_0 \rho + \chi(\rho)(R(\theta,t)-R_0),\ \rho \in [0,1],
\end{equation}
where $\chi \in C^\infty[0,1]$ is monotone increasing from 0 to 1. We also assume that $\chi=0$ for $\rho<1/3$, and $\chi=1$ for $\rho>2/3$. For a function $u(r, \theta,t),\ 0\leq r\leq R(\theta,t)$ after change of coordinates we consider a function $v(\rho, \theta,t) = u(r(\rho, \theta,t), \theta,t),\ 0 \leq \rho \leq 1$.
A direct computation allows us to find how the derivatives in the new coordinates $(\rho, \theta,t)$:
\begin{equation}\label{eq:fc_pd}
    v_\rho = u_r r_\rho, \quad v_\theta = u_r r_\theta+ u_\theta, \quad v_t = u_r r_t + u_t.
\end{equation}
The gradient, divergence, and the Laplacian in new coordinates are given by
\begin{align}
\label{eq:tilde_grad}
   \tilde{\nabla} v &= \nabla u = 
   u_{r} e_{r} + \frac{1}{r}u_{\theta} e_{\theta} = \frac{1}{r_{\rho}}v_{\rho}e_{r} + \frac{1}{r}(-\frac{r_{\theta}}{r_{\rho}} v_{\rho}+v_{\theta})e_{\theta},
   \\
\tilde{\text{div\,}} \mathbf F &=
\frac{1}{r} (r\mathbf F^{r})_{r} + \frac{1}{r} (\mathbf F^{\theta})_{\theta} = 
\frac{1}{r r_{\rho}}(r \mathbf F^r)_{\rho} + \frac{1}{r}\left( -\frac{r_{\theta}}{r_{\rho}}\mathbf F^\theta_{\rho} + \mathbf{F}^\theta_{\theta} \right),
\\
\label{eq:fc_laplace}
    \tilde \Delta v &= {\Delta}u = \frac{1}{r_{\rho}^2}\left( 1+ \frac{r_{\theta}^2}{r^2} \right) u_{\rho \rho} - \frac{2r_{\theta}}{r_{\rho}r^2} u_{\rho \theta} + \frac{1}{r^2} u_{\theta \theta} \nonumber\\
    &+\frac{1}{r_{\rho}}\left( -\frac{r_{\rho \rho}}{r_{\rho}^2}\left( 1+\frac{r_\theta^2}{r^2} \right) + \frac{2r_{\theta}}{r_{\rho}} \frac{r_{\rho \theta}}{r^2} - \frac{r_{\theta \theta}}{r^2} + \frac{1}{r} \right) u_{\rho}.
\end{align}

Note, that after the change of coordinates, the volume of the new domain is constant, but we still can make sense of the term $|\Omega(t)|$ defining it via 
\begin{equation}
    |\Omega(t)| = \frac{1}{2} \int_0^{2\pi} R(\theta,t)^2\, d\theta.
\end{equation}
{The condition \eqref{eq:myosin_mass} for the total myosin rewrites as

\begin{align}
  \int_{\Omega(t)} m(x,t)\,dx &= \int_0^{2\pi} \int_0^{R(\theta, t)} m(r, 
  \theta, t) r\,drd\theta \\ 
  &= \int_0^{2\pi} \int_0^1 m(r(\rho), \theta, t) r(\rho, \theta, t) r_\rho\,d\rho d\theta.
\end{align}
Denote 
\begin{equation}\label{eq:int_weight}
  w[R](\rho, \theta, t) := \dfrac{r(\rho, \theta, t) r_\rho(\rho, \theta, t)}{\rho}.
\end{equation}
}
Finally, after the change of coordinates the normal vector, normal derivative at the boundary, and the curvature at the boundary are given by
\begin{equation}
    \nu[R] = \left( \frac{R}{\sqrt{R_\theta^2+ R^2}}, \frac{-R_\theta}{\sqrt{R_\theta^2+R^2}}\right),
\end{equation}
\begin{equation}
   N[R] u = \frac{1}{R} \left( 1+\frac{R_\theta^2}{R^2} \right)^{1/2} u_{\rho}(1, \theta) - \frac{1}{R} \frac{R_\theta/ R}{\sqrt{ 1+ \frac{R_\theta^2}{R^2}}} u_{\theta} (1, \theta),
\end{equation}
\begin{equation}\label{eq:h[r]}
    H[R] = \frac{-\frac{R_{\theta \theta}}{R}+2\frac{R_\theta^2}{R^2}+1}{R\sqrt{\left( 1+\frac{R_\theta^2}{R^2} \right)^3}}.
\end{equation}

Therefore, after the change of coordinates given by \eqref{eq:Hansawa} in the light of \eqref{eq:fc_pd}-\eqref{eq:h[r]} the system \eqref{eq:1_2d_cm}-\eqref{eq:5_2d_cm} {becomes the following coupled nonlinear PDE on a fixed unit ball $B(0,1)$ domain}

\begin{align}{}
    \label{eq:1_2d_fixed}
    Z\tilde \Delta \sigma &= \sigma - Pm,
    \\
    \label{eq:2_2d_fixed}
    \partial_t m + {m_r \chi(\rho) \partial_t R(\theta, t)} - \frac{dc_1}{dt} \mathbf e_{1} \cdot \tilde\nabla m &= \tilde{\text{div}}\,(D(m) \tilde \nabla m - K m \tilde \nabla \sigma),
\end{align}
with boundary conditions on $\partial B(0,1):$
\begin{align}
    \label{eq:3_2d_fixed}
    N[R] m &= 0,
    \\
    \label{eq:4_2d_fixed}
    \sigma(x) &= 1-|\Omega(t)|- \gamma H[R],
    \\
    \label{eq:5_2d_fixed}
   \frac{R(\theta, t)}{\sqrt{R_\theta(\theta,t)^2 + R(\theta,t)^2}}\partial_t R(\theta, t) &=  K N[R]\sigma - \frac{dc}{dt} \mathbf e_1 \cdot \nu[R],
\end{align}

{Now, in this new system of coordinates, the traveling wave solutions are time independent and solve
\begin{align}
    \label{app:eq:1_2d_fixed_TW}
    Z\tilde \Delta \sigma &= \sigma - Pm, \quad  && (r, \theta) \in B(0,1)\\
    \label{app:eq:2_2d_fixed_TW}
    -V \mathbf e_{1} \cdot \tilde\nabla m &= \tilde{\text{div}}\,(D(m) \tilde \nabla m - K m \tilde \nabla \sigma), \quad &&(r, \theta) \in B(0,1)\\
    \label{app:eq:3_2d_fixed_TW}
    N[R] m &= 0, \quad &&(r, \theta) \in \p B(0,1)\\
    \label{app:eq:4_2d_fixed_TW}
    \sigma(x) &= 1-|\Omega(t)|- \gamma H[R], \quad  &&(r, \theta) \in \p B(0,1)\\
    \label{app:eq:5_2d_fixed_TW}
    0&=  K N[R]\sigma - V \mathbf e_{1} \cdot \nu[R], \quad  &&(r, \theta) \in \p B(0,1)
\end{align}
with an integral constraint

\begin{equation}\label{app:eq:integral_const}
  \int_B m w[R]\, dx=1.
\end{equation}
}

\subsection{Functional analytic setup for Crandall-Rabinowitz theorem}

Now the solutions of  \eqref{eq:1_2d_fixed_TW}-\eqref{eq:integral_const} can be found as solutions to $F(x, K) =0$, where $F(x,K)$ is given by \eqref{eq:F_nonlinear} and it maps from the input Banach space
\begin{equation}\label{eq:Domain_space}
X = X_m \times X_R \times \R \times \R
\end{equation}
where  
\begin{align}
   X_m &:= H^2_{sym}(B) = \{m \in H^2(B) : m(\rho, \theta) = m(\rho, -\theta)\}, \label{eq:X_m}\\
   X_R &:= \{R \in H^{7/2}(0, 2\pi) : R\text{ is even, $2\pi$-periodic},  \int_0^{2\pi} R(\theta) \cos \theta d\theta =0 \} \label{eq:X_R},
\end{align}
 to the output space $Y = (y_1, y_2, y_3, y_4)$

\begin{align}\label{eq:target_space}
&Y=
\Big\{ (y_1,\ldots, y_4) \in L^2_{sym}(B) \times
L^2_{sym}(\partial B) \times 
L^2_{sym}(\partial B) \times 
\R
\},
\end{align}

where the $sym$ spaces are understood in the same sense as in~\eqref{eq:X_m}.

{The constraint $\int_0^{2\pi} R(\theta)\cos\theta\, d\theta =0$ eliminates horizontal shifts of the cell from consideration.

We present below the full formulation of the Crandall-Rabinowitz theorem (see Theorem 1.7 in ~\cite{crandall1971bifurcation}).

\begin{theorem}
Let $X,Y$ be Banach spaces and let $F(x,\mu)$ be a map from a neighborhood of $(0,\mu_{0})$ in $X\times\mathbb{R}$ into $Y$. Suppose that
\begin{enumerate}
  \item[(i)] $F(0,\mu)=0$ for all $\mu$ in a neighborhood of $\mu_{0}$,
  \item[(ii)] $F(x, \mu)$ is a $C^p$ map in a neighborhood of $(0,\mu_{0})$,
  \item[(iii)]  $\ker F_{x}(0,\mu_{0})$ is one-dimensional, spanned by $x_{1}$, $\operatorname{codim}(\operatorname{Ran} F_{x}(0,\mu_{0}))=1$,
  \item[(iv)] $F_{\mu x}(0,\mu_{0})\,x_{1}\notin \operatorname{Ran} F_{x}(0,\mu_{0})$.
\end{enumerate}
Then $(0,\mu_{0})$ is a bifurcation point of the equation $F(x,\mu)=0$ in the following sense: in a neighborhood of $(0,\mu_{0})$ the set of solutions of $F(x,\mu)=0$ consists of two $C^{p-2}$ smooth curves $\Gamma_{1}$ and $\Gamma_{2}$ which intersect only at the point $(0,\mu_{0})$. Moreover, $\Gamma_{1}$ is the trivial branch
\[
\Gamma_{1}=\{(0,\mu)\},
\]
and $\Gamma_{2}$ can be parameterized as
\[
\Gamma_{2}:(x(\varepsilon),\mu(\varepsilon)),\ |\varepsilon|\ \text{small},
\]
with
\[
(x(0),\mu(0))=(0,\mu_{0}),\ x'(0)=x_{1}.
\]
\end{theorem}
}

The functional spaces in~\eqref{eq:Domain_space}, \eqref{eq:target_space} are chosen so that we consider solutions symmetric with respect to the direction of motion, i.e. the $x$-axis, and eliminate shifts to ensure the simple-eigenvalue property.
The regularity of spaces is chosen such that the operator $F$ in~\eqref{eq:F_nonlinear} is of the class $C^3$.

Indeed, let $\tilde F: X \times X_\sigma \times \R \to Y$, with \( X_\sigma:=H^2_{sym}(B),\)  denote the unreduced full operator in~\eqref{eq:F_nonlinear}, in which $\sigma$ is regarded as a free variable in the space $X_\sigma = H^2_{sym}(B)$, and not necessarily the solution to the elliptic problem. Then, the operator to which we apply the C.-R. bifurcation theorem is given by
\[
F= \tilde F \circ S.
\]
Therefore, to show that $F$ is a $C^3$ map, it is sufficient to show that both $\tilde F$ and $S$ are $C^3$ maps. In short, the key idea here is that for $R(\theta)$ close to the circle $R_0$, the operators with tildes behave like their regular counterparts, and the smoothness of $ S$ follows from the Implicit Function Theorem.

We now briefly justify the required regularity of the operators \(\tilde F\) and \(S\). We start with \(\tilde F\). Choose a neighborhood \(\mathcal U\subset H^{7/2}(S^1)\) of \(R_0\) such that, for every \(R\in \mathcal U\),
\[
        R(\theta)\ge c_0>0,
        \qquad
        r_\rho(\rho,\theta)
        =
        R_0+\chi'(\rho)(R(\theta)-R_0)
        \ge c_0>0 .
\]
Since \(H^{7/2}(S^1)\hookrightarrow C^{2,\alpha}(S^1)\), for every \(\alpha<1\), all coefficients appearing in
\[
        \tilde\nabla_R,\ 
        \tilde{\operatorname{div}}_R,\ 
        \tilde\Delta_R,\ 
        N[R],\ 
        \nu[R],\ 
        H[R],\ 
        w[R]
\]
are obtained from \(R,R_\theta,R_{\theta\theta}\), which are uniformly bounded, by smooth algebraic operations with denominators bounded away from zero. Moreover, because \(\chi=0\) near \(\rho=0\), the change of variables behaves at the origin like the regular polar coordinates. Hence, the tilde operators depend smoothly on \(R\) as functionals between the Sobolev spaces used in the definition of \(X\) and \(Y\). Together with the fact that the trace map \(H^2(B)\to H^{3/2}(\partial B)\) is continuous, and that \(D\) has the required \(C^4\)-regularity near \(m_0\), this implies that the unreduced operator $\tilde F$ is of class $C^3$ around the bifurcation point.

It remains only to justify the same regularity for the solution operator \(\sigma=S(x,K)\). Define
\[
G(x, \sigma, K)
=
\left(
        Z\tilde\Delta_R\sigma-\sigma+Pm,\,
        K N[R]\sigma-V e_1\cdot \nu[R]
\right).
\]
By the preceding coefficient argument, also $G \colon X \times X_\sigma \times \R \to X \times \R$ is of class $C^3$. At the homogeneous state,
\[
D_\sigma G(\sigma_0,m_0,0,R_0,K_0)[\tau]
=
\left(
        \frac{Z}{R_0^2}\Delta \tau-\tau,\,
        \frac{K_0}{R_0}\partial_\rho\tau\big|_{\partial B}
\right).
\]
This operator is an isomorphism from \(H^2_{\mathrm{sym}}(B)\) onto
\(L^2_{\mathrm{sym}}(B)\times H^{1/2}_{\mathrm{sym}}(\partial B)\), by standard elliptic regularity for the Neumann problem with a zeroth-order term. Therefore, the Implicit Function Theorem gives the required regularity of \(S\).

\section*{Author Contributions}
Leonid Berlyand, Oleksii Krupchytskyi, and Tim Laux each contributed to the formulation of the results, the development of their proofs, and the writing and editing of the manuscript.

\printbibliography

@article{RybBer23,
	title={Emergence of traveling waves and their stability in a free boundary model of cell motility},
	author={Rybalko, Volodymyr and Berlyand, Leonid},
	journal={Transactions of the American Mathematical Society},
	year={2023},
	volume={376},
	pages={1799-1844},
	doi={10.1090/tran/8824}
}

@article{cucchi2020cahn,
  title={A Cahn--Hilliard Model for Cell Motility},
  author={Cucchi, Alessandro and Mellet, Antoine and Meunier, Nicolas},
  journal={SIAM Journal on Mathematical Analysis},
  volume={52},
  number={4},
  pages={3843--3880},
  year={2020},
  publisher={SIAM}
}

@article{cucchi2022self,
  title={Self polarization and traveling wave in a model for cell crawling migration},
  author={Cucchi, Alessandro and Mellet, Antoine and Meunier, Nicolas},
  journal={Discrete Contin. Dyn. Syst},
  volume={42},
  number={5},
  pages={2381--2407},
  year={2022}
}

@article{BerRybSaf2022,
  title={Asymptotic stability of contraction-driven cell motion},
  author={Safsten, C Alex and Rybalko, Volodmyr and Berlyand, Leonid},
  journal={Physical Review E},
  volume={105},
  number={2},
  pages={024403},
  year={2022},
  publisher={APS}
}

@article{BerSafTru26,
  author  = {Berlyand, Leonid and Safsten, C. Alex and Truskinovsky, Lev},
  title   = {Nonlinear Stability in a Free Boundary Model of Active Locomotion},
  journal = {Archive for Rational Mechanics and Analysis},
  volume  = {250},
  pages   = {2},
  year    = {2026},
  doi     = {10.1007/s00205-025-02153-5}
}

@article{DroZieSch23,
  title={Optogenetic control of migration of contractile cells predicted by an active gel model},
  author={Drozdowski, Oliver M and Ziebert, Falko and Schwarz, Ulrich S},
  journal={Communications Physics},
  volume={6},
  number={1},
  pages={158},
  year={2023},
  publisher={Nature Publishing Group UK London}
}

@article{RecTruThi13,
  title={Contraction-driven cell motility},
  author={Recho, Pierre and Putelat, Thibaut and Truskinovsky, Lev},
  journal={Physical review letters},
  volume={111},
  number={10},
  pages={108102},
  year={2013},
  publisher={APS}
}

@article{PutRecTru18,
  title={Mechanical stress as a regulator of cell motility},
  author={Putelat, Thibaut and Recho, Pierre and Truskinovsky, Lev},
  journal={Physical Review E},
  volume={97},
  number={1},
  pages={012410},
  year={2018},
  publisher={APS}
}

@article{RecPutTru15,
  title={Mechanics of motility initiation and motility arrest in crawling cells},
  author={Recho, Pierre and Putelat, Thibaut and Truskinovsky, Lev},
  journal={Journal of the Mechanics and Physics of Solids},
  volume={84},
  pages={469--505},
  year={2015},
  publisher={Elsevier}
}

@misc{MeuMagGar2025,
      title={Traveling Motility of Actin Lamellar Fragments Under spontaneous symmetry breaking}, 
      author={Claudia García and Martina Magliocca and Nicolas Meunier},
      year={2025},
      eprint={2409.05762},
      archivePrefix={arXiv},
      primaryClass={math.AP},
      url={https://arxiv.org/abs/2409.05762}, 
}

@misc{MeuAla2025,
      title={Existence of traveling wave for a coupled incompressible Darcy's free boundary model with undercooling effect and surface tension}, 
      author={Claire Alamichel and Nicolas Meunier},
      year={2025},
      eprint={2501.04576},
      archivePrefix={arXiv},
      primaryClass={math.AP},
      url={https://arxiv.org/abs/2501.04576}, 
}

@article{king2021free,
  title={Free boundary problems for Stokes flow, with applications to the growth of biological tissues},
  author={King, John R and Venkataraman, Chandrasekhar},
  journal={Interfaces and Free Boundaries},
  volume={23},
  number={4},
  pages={433--458},
  year={2021}
}

@article{MogCop2020,
  title={A hybrid stochastic--deterministic mechanochemical model of cell polarization},
  author={Copos, Calina and Mogilner, Alex},
  journal={Molecular biology of the cell},
  volume={31},
  number={15},
  pages={1637--1649},
  year={2020},
  publisher={The American Society for Cell Biology}
}

@article{ZhaBeiZha2023bif,
  title={Bifurcation for a free-boundary problem modeling small plaques with reverse cholesterol transport},
  author={Zhang, Xiaohong and Hu, Bei and Zhang, Zhengce},
  journal={Journal of Mathematical Analysis and Applications},
  volume={517},
  number={1},
  pages={126604},
  year={2023},
  publisher={Elsevier}
}

@book{Crank1984,
  author    = {Crank, John},
  title     = {Free and Moving Boundary Problems},
  publisher = {Clarendon Press},
  year      = {1984},
  address   = {Oxford, UK},
}

@article{crandall1971bifurcation,
  title={Bifurcation from simple eigenvalues},
  author={Crandall, Michael G and Rabinowitz, Paul H},
  journal={Journal of Functional Analysis},
  volume={8},
  number={2},
  pages={321--340},
  year={1971},
  publisher={Elsevier}
}

@article{friedman2006asymptotic,
  title={Asymptotic stability for a free boundary problem arising in a tumor model},
  author={Friedman, Avner and Hu, Bei},
  journal={Journal of Differential Equations},
  volume={227},
  number={2},
  pages={598--639},
  year={2006},
  publisher={Elsevier}
}

@article{blanch2013spontaneous,
  title={Spontaneous motility of actin lamellar fragments},
  author={Blanch-Mercader, Carles and Casademunt, J},
  journal={Physical review letters},
  volume={110},
  number={7},
  pages={078102},
  year={2013},
  publisher={APS}
}

@article{gustafsson2004conformal,
  title={Conformal and Potential Analysis in Hele-Shaw cells},
  author={Gustafsson, Bj{\"o}rn and Vasil’ev, Alexander},
  journal={Advances in Mathematical Fluid Mechanics. Stockholm-Valparaiso, Sweden},
  year={2004},
  publisher={Citeseer}
}

@article{alazard2022traveling,
  title={Traveling wave solution for a coupled incompressible Darcy's free boundary problem with surface tension},
  author={Alazard, Thomas and Magliocca, Martina and Meunier, Nicolas},
  journal={arXiv preprint arXiv:2205.04365},
  year={2022}
}

@article{chelly2022cell,
  title={Cell motility as an energy minimization process},
  author={Chelly, Haythem and Recho, Pierre},
  journal={Physical Review E},
  volume={105},
  number={6},
  pages={064401},
  year={2022},
  publisher={APS}
}

@article{borisovich2005symmetry,
  title={Symmetry-breaking bifurcations for free boundary problems},
  author={Borisovich, Andrei and Friedman, Avner},
  journal={Indiana University mathematics journal},
  pages={927--947},
  year={2005},
  publisher={JSTOR}
}

@article{ziebert2016computational,
  title={Computational approaches to substrate-based cell motility},
  author={Ziebert, Falko and Aranson, Igor S},
  journal={npj Computational Materials},
  volume={2},
  number={1},
  pages={1--16},
  year={2016},
  publisher={Nature Publishing Group}
}

@article{aranson2022bacterial,
  title={Bacterial active matter},
  author={Aranson, Igor S},
  journal={Reports on Progress in Physics},
  volume={85},
  number={7},
  pages={076601},
  year={2022},
  publisher={IOP Publishing}
}

@article{marchetti2013hydrodynamics,
  title={Hydrodynamics of soft active matter},
  author={Marchetti, M Cristina and Joanny, Jean-Fran{\c{c}}ois and Ramaswamy, Sriram and Liverpool, Tanniemola B and Prost, Jacques and Rao, Madan and Simha, R Aditi},
  journal={Reviews of modern physics},
  volume={85},
  number={3},
  pages={1143--1189},
  year={2013},
  publisher={APS}
}

@article{ron2023polarization,
  title={Polarization and motility of one-dimensional multi-cellular trains},
  author={Ron, Jonathan E and d'Alessandro, Joseph and Cellerin, Victor and Voituriez, Raphael and Ladoux, Benoit and Gov, Nir S},
  journal={Biophysical journal},
  volume={122},
  number={23},
  pages={4598--4613},
  year={2023},
  publisher={Elsevier}
}

@article{gompper20202020,
  title={The 2020 motile active matter roadmap},
  author={Gompper, Gerhard and Winkler, Roland G and Speck, Thomas and Solon, Alexandre and Nardini, Cesare and Peruani, Fernando and L{\"o}wen, Hartmut and Golestanian, Ramin and Kaupp, U Benjamin and Alvarez, Luis and others},
  journal={Journal of Physics: Condensed Matter},
  volume={32},
  number={19},
  pages={193001},
  year={2020},
  publisher={IOP Publishing}
}

@article{berlyand2016phase,
  title={Phase-field model of cell motility: Traveling waves and sharp interface limit},
  author={Berlyand, Leonid and Potomkin, Mykhailo and Rybalko, Volodymyr},
  journal={Comptes Rendus. Math{\'e}matique},
  volume={354},
  number={10},
  pages={986--992},
  year={2016}
}

@article{siewe2023cancer,
  title={Cancer therapy with immune checkpoint inhibitor and CSF-1 blockade: A mathematical model},
  author={Siewe, Nourridine and Friedman, Avner},
  journal={Journal of Theoretical Biology},
  volume={556},
  pages={111297},
  year={2023},
  publisher={Elsevier}
}

@article{carrillo2023noise,
  title={Noise-driven bifurcations in a nonlinear Fokker--Planck system describing stochastic neural fields},
  author={Carrillo, Jos{\'e} A and Roux, Pierre and Solem, Susanne},
  journal={Physica D: Nonlinear Phenomena},
  volume={449},
  pages={133736},
  year={2023},
  publisher={Elsevier}
}

@article{keren2008mechanism,
  title={Mechanism of shape determination in motile cells},
  author={Keren, Kinneret and Pincus, Zachary and Allen, Greg M and Barnhart, Erin L and Marriott, Gerard and Mogilner, Alex and Theriot, Julie A},
  journal={Nature},
  volume={453},
  number={7194},
  pages={475--480},
  year={2008},
  publisher={Nature Publishing Group UK London}
}

@article{lacayo2007emergence,
  title={Emergence of large-scale cell morphology and movement from local actin filament growth dynamics},
  author={Lacayo, Catherine I and Pincus, Zachary and VanDuijn, Martijn M and Wilson, Cyrus A and Fletcher, Daniel A and Gertler, Frank B and Mogilner, Alex and Theriot, Julie A},
  journal={PLoS biology},
  volume={5},
  number={9},
  pages={e233},
  year={2007},
  publisher={Public Library of Science San Francisco, USA}
}

@article{carrillo2025well,
  title={Well-posedness and stability of a stochastic neural field in the form of a partial differential equation},
  author={Carrillo, Jos{\'e} A and Roux, Pierre and Solem, Susanne},
  journal={Journal de Math{\'e}matiques Pures et Appliqu{\'e}es},
  volume={193},
  pages={103623},
  year={2025},
  publisher={Elsevier}
}

@article{berlyand2024bifurcation,
  title={Bifurcation of finger-like structures in traveling waves of epithelial tissues spreading},
  author={Berlyand, Leonid and Rybalko, Antonina and Rybalko, Volodymyr and Safsten, Clarke Alex},
  journal={Journal of Mathematical Analysis and Applications},
  volume={538},
  number={1},
  pages={128338},
  year={2024},
  publisher={Elsevier}
}

@article{alert2019active,
  title={Active fingering instability in tissue spreading},
  author={Alert, Ricard and Blanch-Mercader, Carles and Casademunt, Jaume},
  journal={Physical review letters},
  volume={122},
  number={8},
  pages={088104},
  year={2019},
  publisher={APS}
}

@article{barnhart2017adhesion,
  title={Adhesion-dependent wave generation in crawling cells},
  author={Barnhart, Erin L and Allard, Jun and Lou, Sunny S and Theriot, Julie A and Mogilner, Alex},
  journal={Current Biology},
  volume={27},
  number={1},
  pages={27--38},
  year={2017},
  publisher={Elsevier}
}

@article{barnhart2011adhesion,
  title={An adhesion-dependent switch between mechanisms that determine motile cell shape},
  author={Barnhart, Erin L and Lee, Kun-Chun and Keren, Kinneret and Mogilner, Alex and Theriot, Julie A},
  journal={PLoS biology},
  volume={9},
  number={5},
  pages={e1001059},
  year={2011},
  publisher={Public Library of Science San Francisco, USA}
}

@article{du2025precise,
  title={Precise travelling-wave behaviour in problems with doubly nonlinear diffusion},
  author={Du, Yihong and G{\'a}rriz, Alejandro and Quir{\'o}s, Fernando},
  journal={Journal of the European Mathematical Society},
  year={2025}
}

@article{audrito2019bistable,
title = {Bistable reaction equations with doubly nonlinear diffusion},
journal = {Discrete and Continuous Dynamical Systems},
volume = {39},
number = {6},
pages = {2977-3015},
year = {2019},
issn = {1078-0947},
doi = {10.3934/dcds.2019124},
url = {https://www.aimsciences.org/article/id/840e1432-eb4f-4c41-93d1-40068538193a},
author = {Alessandro Audrito},
keywords = {Bistable equations, doubly nonlinear diffusion, free boundary, long-time behaviour, travelling waves}
}

@article{barnhart2015balance,
  title={Balance between cell- substrate adhesion and myosin contraction determines the frequency of motility initiation in fish keratocytes},
  author={Barnhart, Erin and Lee, Kun-Chun and Allen, Greg M and Theriot, Julie A and Mogilner, Alex},
  journal={Proceedings of the National Academy of Sciences},
  volume={112},
  number={16},
  pages={5045--5050},
  year={2015},
  publisher={National Academy of Sciences}
}

@article{hadjitheodorou2021directional,
  title={Directional reorientation of migrating neutrophils is limited by suppression of receptor input signaling at the cell rear through myosin II activity},
  author={Hadjitheodorou, Amalia and Bell, George RR and Ellett, Felix and Shastry, Shashank and Irimia, Daniel and Collins, Sean R and Theriot, Julie A},
  journal={Nature Communications},
  volume={12},
  number={1},
  pages={6619},
  year={2021},
  publisher={Nature Publishing Group UK London}
}
\pagestyle{plain}
\appendix
\newpage

\section{Expansions for general $D(m)$ in 2D}\label{app:general_expansions}

We expand the traveling wave solution as well as the domain of the cell in power series of $V$ for small $V$ around the rotationally symmetric resting cell configuration:
\begin{align}
	\sigma(r, \theta, V) &= \sigma_0 +\sigma_1(r,\theta) V+ \cdots\\
	m(r, \theta, V) &= m_0 + m_1(r, \theta) V + \cdots \\
	K(V) &= K_0 + K_1 V + K_2 V^2 + \cdots\\
	\rho(\theta,V) &= R_0 + \rho_1(\theta) V + \cdots,
\end{align}
where $\sigma_0, m_0, R_0$ are the steady resting state given by \eqref{eq:steady_state}. From the periodicity, each term can be expressed via Fourier series. From the symmetry around $x$-axis ($\theta=0$) we conclude that we can include only cosine modes in the expansions of the terms. Now, expand each term in Fourier modes
\begin{align}\label{eq:2d:sexp}
	\sigma_i(r, \theta) &= \sum\limits_{j=0}^\infty \sigma_{ij}(r)\cos(j \theta) \\
	\label{eq:2d:mexp}
	m_i(r, \theta) &= \sum\limits_{j=0}^\infty m_{ij}(r) \cos(j \theta) \\
	\label{eq:2d:rexp}
	\rho_i(\theta) &= \sum\limits_{j=0}^\infty \rho_{ij} \cos( j\theta),
\end{align}
for Fourier coefficients $\sigma_{ij}(r), m_{ij}(r)$, and $\rho_{nm}$. We will use the symmetry of the cell with respect to direction change, that is:
\begin{align}
	\label{eq:2d:ssym}
	\sigma(r, \theta, V) &= \sigma(r, \pi-\theta, -V)\\
	\label{eq:2d:msym}
	m(r, \theta, V) &= m(r, \pi-\theta, -V)\\
	\label{eq:2d:rsym}
	\rho(\theta, V) &= \rho(\pi-\theta, -V) \\
	\label{eq:2d:ksym}
	K(V) &= K(-V).
\end{align}

Substituting the expansion \eqref{eq:2d:sexp} into \eqref{eq:2d:ssym} and comparing the terms of like powers of $V$ and Fourier modes:
\begin{align}
\begin{split}\label{eq:2D:parity}
	\sigma_{ij}(r)\cos(j\theta) V^i &= \sigma_{ij}(r)\cos(j(\pi-\theta)) (-V)^i = (-1)^{i-j}
		\sigma_{ij}(r) \cos(j\theta) V^i.
\end{split}
\end{align}
From this we conclude that $\sigma_{ij}=0$ if $i$ and $j$ have different parity. The same argument yields $m_{ij}=0$ and $\rho_{ij}=0$ if $i$ and $j$ have different parity as well. From the symmetry, we can conclude that $K_i=0$ for all odd $i$.  

Next, we can eliminate the translation of the cell by imposing $\rho_{i1}=0$, that is there are no $\cos \theta$ modes in the expansions of the boundary, as such modes would correspond to the shifts in the $x$-axis.   

Therefore we have the following ansatz:
\begin{align}
	\label{eq:ans:s1}
	\sigma_1(r, \theta) &= \sigma_{11}(r)\cos(\theta)\\
	\label{eq:ans:s2}
	\sigma_2(r, \theta) &= \sigma_{20}(r) + \sigma_{22}\left(r\right)\cos(2\theta)\\
	\label{eq:ans:s3}
	\sigma_3(r, \theta) &= \sigma_{31}(r)\cos(\theta) + \sigma_{33}(r)\cos(3\theta)\\
	\label{eq:ans:m1}
	m_1(r, \theta) &= m_{11}(r)\cos(\theta)\\
	\label{eq:ans:m2}
	m_2(r, \theta) &= m_{20}(r) + m_{22}(r)\cos(2\theta)\\
	\label{eq:ans:m3}
	m_3(r, \theta) &= m_{31}(r)\cos(\theta) + m_{33}(r)\cos(3\theta)\\
	\label{eq:ans:rho1}
	\rho_1(\theta)&=0\\
	\label{eq:ans:rho2}
	\rho_2(\theta) &= \rho_{20}+\rho_{22}\cos(2\theta)\\
	\label{eq:ans:rho3}
	\rho_3(\theta) &= \rho_{33}\cos(3\theta).
	\end{align}

\subsection{Derivation of ODEs for expansion coefficients of Traveling Waves}

\subsubsection{Expansion of \eqref{eq:1_2d} in $V$:}

This is a linear equation which yields a simple expansion:
\begin{align}
	\label{eq:2d:1eq_0ord}
Z\Delta \sigma_0 = \sigma_0-Pm_0\\
\label{eq:2d:1eq_1ord}
Z\Delta \sigma_1 = \sigma_1-Pm_1\\
\label{eq:2d:1eq_2ord}
Z\Delta \sigma_2 = \sigma_2-Pm_2\\
\label{eq:2d:1eq_3ord}
Z\Delta \sigma_3 = \sigma_3-Pm_3
\end{align}

\subsubsection{Expansion of \eqref{eq:2_2d} in $V$:}
Now the RHS of \eqref{eq:2_2d} requires explicit computation. The linear expansions in first three orders reads
\begin{align}
	\label{eq:meq:0ord}
	0 =& \nabla\cdot(D(m_0) \nabla m_0 - K_0 \nabla(m_0 \nabla \sigma_0)) \\
	\label{eq:meq:1ord}
	0 =& D(m_0) \Delta m_1 - K_0 m_0 \Delta \sigma_1 \\
	\label{eq:meq:2ord}
	0 =& D(m_0) \Delta m_2 + \frac{1}{2} D'(m_0) \Delta m_1^2 + \mathbf e_1 \cdot\nabla m_1 - K_0 \nabla \cdot (m_0 \nabla \sigma_2+m_1 \nabla \sigma_1)\\
	\label{eq:meq:3ord}
	0 =& D(m_0) \Delta m_3 + D'(m_0) \Delta(m_1 m_2) + \frac{D''(m_0)}{6}\Delta(m_1^3) + \mathbf e_1 \cdot \nabla m_2 \\
	  &-K_2 m_0 \Delta\sigma_1 - K_0 \nabla \cdot (m_2 \nabla \sigma_1 + m_1 \nabla \sigma_2+ m_0 \nabla\sigma_3) \nonumber
\end{align}

\subsubsection{Expansion of \eqref{eq:3_2d} in $V$:}\label{subsubsec:exp_mbc}

First, we must expand the unit normal vector $\nu(\theta)$. The unit vector normal to the cell domain $\Omega(\theta)$ is given by:
\begin{equation}
	\nu(\theta) = \frac{\Omega(\theta)}{\sqrt{\Omega_\theta(\theta)^2+ \Omega(\theta)^2}} e_r - \frac{\Omega_\theta(\theta)}{\sqrt{\Omega_\theta(\theta)^2+ \Omega(\theta)^2}} e_\theta,
\end{equation}
where $e_r, e_\theta$ are the unit vectors in the radial and angular directions.
Expanding the denominator, one can show that:
\begin{equation}
	\nu(\theta) = e_r + \frac{1}{R_0}(\rho_{2,\theta}V^2+\rho_{3,\theta}V^3) e_\theta + O(V^4).
\end{equation}

Next, the expansion of the gradient around $r=R_0$ and $V=0$ in polar coordinates yields
\begin{align*}
	&\left.\nabla_{(r, \theta)} m(r, \theta) \right|_{r=\rho(\theta, V)} 
    \\&=
    \Big(m_{0,r}+m_{1r}V + (m_{2,r}+m_{0rr}\rho_2)V^2+(m_{3,r}+m_{0,rr}\rho_3 + m_{1,rr}\rho_2)V^3 \Big)e_r+\\
	\\&+\frac{1}{R_0} \Big(m_{0, \theta} + m_{1,\theta} V +  (m_{2,\theta}+ m_{0,r \theta})V^2 + (m_{3,\theta}+m_{0,r \theta} \rho_3 { - \frac{1}{R_0} }m_{1,\theta} \rho_{2}) V^3\Big) e_\theta + O(V^4).
\end{align*}
Combining the expressions above we obtain
\begin{align*}
	\nu(\theta) \cdot \nabla_{(r, \theta)} m(r, \theta, V) = m_{0r} + m_{1r}V + (m_{2r}+m_{0rr} \rho_2 + \frac{1}{R_0^2} m_{0\theta}2\rho_{2\theta})V^2 +\\
	+(m_{3,r}+m_{0,rr}\rho_3+m_{1,rr}\rho_2+\frac{1}{R_0} m_{0,\theta}\rho_{3,\theta}-\frac{1}{R_0^2} m_{1,\theta}\rho_{2,\theta})V^3 + O(V^4).
\end{align*}

Recall that steady state is homogeneous, thus all partial derivatives of zeroth order expansions are zero. This allows to derive the following boundary conditions:
\begin{align}
	\label{eq:mbc:0ord}
	m_{0,r}(R_0, \theta) & = 0, \\
	\label{eq:mbc:1ord}
	m_{1,r}(R_0, \theta) & = 0, \\
	\label{eq:mbc:2ord}
	m_{2,r}(R_0, \theta) & = 0, \\
	\label{eq:mbc:3ord}
	m_{3,r}(R_0, \theta) & = -m_{1,rr}(R_0, \theta)\rho_2(\theta) {+} \frac{1}{R_0^2} m_{1,\theta}(R_0,\theta)\rho_{2,\theta}(\theta).
\end{align}

\subsubsection{Expansion of \eqref{eq:4_2d} in $V$:}
The LHS of \eqref{eq:4_2d} expands as 
\begin{align*}
	\sigma(R(\theta), \theta) =& \sigma_0(R_0, \theta)+ \sigma_1(R_0, \theta) V + (\sigma_2(R_0, \theta)+\sigma_{0r}(R_0,\theta)\rho_2) V^2  \\
	&+(\sigma_3(R_0, \theta)+ \sigma_{0r}(R_0, \theta)\rho_3+\sigma_{1r}(R_0, \theta)\rho_2) V^3 + O(V^4).
\end{align*}
To expand the RHS  \eqref{eq:4_2d} we must first expand the volume of the region $\Omega$ in $V$:
\begin{equation}
	1- | \Omega(t)| = 1- \pi R_0^2 - {R_0}\int_0^{2 \pi} \rho_2(\theta) d \theta V^2 + O(V^4).
\end{equation}
We also expand the curvature $H$ in $V$
\begin{equation}
H = \frac{1}{ R_0} - \frac{1}{R_0^2} (\rho_2 + \rho_{2\theta \theta}) V^2 - \frac{1}{R_0^2}(\rho_3 + \rho_{3\theta\theta}) V^3 + O(V^4).
\end{equation}

Thus, we derive the following expansions
\begin{align}
	\label{eq:sigma:0ord}
	\sigma_0(R_0, \theta) & = 1- \pi R_0^2 -\frac{\gamma}{ R_0}, \\
	\label{eq:sigma:1ord}
	\sigma_1(R_0, \theta) & = 0, \\
	\label{eq:sigma:2ord}
	\sigma_2(R_0, \theta) & = -{ R_0}\int_0^{2 \pi} \rho_2(\theta) d \theta {+} \frac{\gamma}{R_0^2}(\rho_2+\rho_{2\theta \theta}), \\
	\label{eq:sigma:3ord}
	\sigma_3(R_0, \theta) & = -\sigma_{1r}(R_0,\theta) \rho_2 {+} \frac{\gamma}{R_0^2}(\rho_3+\rho_{3\theta\theta}).
\end{align}

\subsubsection{Expansion of \eqref{eq:5_2d} in $V$:}

The expansion of the kinematic boundary condition \eqref{eq:5_2d} for $\sigma$ is similar to the expansion of the BC \eqref{eq:3_2d} for $m$ in section \ref{subsubsec:exp_mbc} with the only difference that we have to take into account the correct powers of the expansion of $K$.
 We will derive the following boundary conditions:
 \begin{align}
	\label{eq:K_sigma:0ord}
	K_0 \sigma_{0,r}(R_0, \theta) & = 0, \\
	\label{eq:K_sigma:1ord}
	K_0 \sigma_{1,r}(R_0, \theta) & = 1, \\
	\label{eq:K_sigma:2ord}
	K_0 \sigma_{2,r}(R_0, \theta) & = 0,\\
	\label{eq:K_sigma:3ord}
	K_0 (\sigma_{3,r}(R_0, \theta) + \sigma_{1,rr}(R_0, \theta) \rho_2(\theta) {-} \frac{1}{R_0^2}\sigma_{1,\theta}(R_0, \theta) \rho_{2,\theta}(\theta)) + K_2 \sigma_{1r} & = 0.
\end{align}
{
\subsubsection{Expansion of \eqref{eq:myosin_mass} in $V$:}
 The integral constraint can be written in polar coordinates as 
 \begin{equation*}
   \int_0^{2\pi}\int_0^{\rho(\theta,V)} m(r,\theta,V)\,r\,dr\,d\theta=1. 
  \end{equation*} 
 Using the expansions of $m$ and $\rho$ in $V$, we obtain 
 \begin{align*} 
  1 =& \int_0^{2\pi}\int_0^{R_0} \left( m_0+m_1V+m_2V^2+m_3V^3 \right)r\,dr\,d\theta \\ &+ R_0\int_0^{2\pi} \left( m_0\rho_2V^2+ \left(m_0\rho_3+m_1(R_0,\theta)\rho_2\right)V^3 \right)d\theta +O(V^4). 
\end{align*}
 Thus, comparing terms of the same order in $V$ gives 
 \begin{align} \label{eq:mass:0ord} 
  \pi R_0^2 m_0 &= 1,\\ 
\label{eq:mass:1ord} \int_B m_1\,dx &=0,\\ 
\label{eq:mass:2ord} \int_B m_2\,dx + m_0R_0\int_0^{2\pi}\rho_2(\theta)\,d\theta &=0,\\ 
\label{eq:mass:3ord} \int_B m_3\,dx + R_0\int_0^{2\pi}m_1(R_0,\theta)\rho_2(\theta)\,d\theta + m_0R_0\int_0^{2\pi}\rho_3(\theta)\,d\theta &=0. 
\end{align} 
Using the ansatz \eqref{eq:ans:m1}--\eqref{eq:ans:rho3}, equations \eqref{eq:mass:1ord} and \eqref{eq:mass:3ord} are automatically satisfied, while \eqref{eq:mass:2ord} reduces to \begin{equation} \label{eq:mass:2ord_fourier} \int_0^{R_0} m_{20}(r)r\,dr + m_0R_0\rho_{20} = 0. \end{equation}

}

\section{Proof of the lemmas}\label{app:lemmas}

\begin{proof}[Proof of Lemma~\ref{lem:first_ord}]

Let $B$ be the disk of radius $R_0$ centered at the origin. 
Then, using formulas from Appendix \ref{app:general_expansions} the first order expansion reads: 
        \begin{equation}
  \left\{
  \begin{aligned}
    Z\Delta\sigma_{1}           &= \sigma_{1}-Pm_{1}
                                && \text{in } B,\\
    0                           &= D(m_{0})\Delta m_{1}
                                   -K_{0}m_{0}\,\Delta\sigma_{1}
                                && \text{in } B,\\
    m_{1r}(R_{0},\theta)        &= 0
                                &&\text{on } \partial B, \\
    \sigma_{1}(R_{0},\theta)   &= 0
                                && \text{on } \partial B,\\
    K_{0}\sigma_{1r}(R_{0},\theta) &= \cos \theta
                                &&\text{on } \partial B.
  \end{aligned}
  \right.
\end{equation}

    If we make the change of variables $m_1 = \frac{1}{D(m_0)} \hat m_1(x)$ as in \eqref{lem:1stord_2d} we get 
    \begin{equation}
  \left\{
  \begin{aligned}
    Z\Delta\hat\sigma_{1} &= \hat\sigma_{1} - P\hat m_{1} && \text{in } B,\\
    0 &= \Delta\hat m_{1} - \hat K_{0} m_{0}\,\Delta\hat\sigma_{1} && \text{in } B,\\
    \hat m_{1r}(R_{0},\theta) &= 0 && \text{on } \partial B,\\
    \hat\sigma_{1}(R_{0},\theta)&= 0&& \text{on } \partial B,\\
    \hat K_{0}\,\hat\sigma_{1r}(R_{0},\theta)&= \cos \theta&& \text{on } \partial B.
  \end{aligned}
  \right.
  \label{eq:1ordhat}
\end{equation}
 
Now system \eqref{eq:1ordhat} is independent of $D(m)$. Therefore the first order perturbation can be expressed in the form \eqref{lem:1stord_2d}. Moreover, the explicit solution of the system \eqref{eq:1ordhat} in terms of Bessel function can be found using Fourier analysis. Indeed, using the ansatz 
\begin{equation}
    \hat m_{1} = \hat m_{11}(\rho) \cos \theta, \hat \sigma_1 = \hat \sigma_{11}\left(\rho\right)\cos\theta.
\end{equation}
After substituting this into the system \eqref{eq:1ordhat} we derive a system of ODEs that has an explicit solution
\begin{align}
    \hat \sigma_{11}(\rho)&=
\frac{1}{P \hat K_{0} m_{0}-1}
\Big(P m_0\rho
-\frac{R_0}{\hat K_{0}}
\frac{J_{1}\bigl(\alpha/R_0\rho\bigr)}{\alpha J_{1}'(\alpha )}
\Big),\\
    \hat m_{11}(\rho) &= \hat K_0 m_0 \hat \sigma_{11} - m_0 \rho
\end{align}

Note, that the same formulas will be recovered in the proof of the simple zero eigenvalue in the Crandall-Rabinowitz theorem up to the change of coordinates between the ball of radius $R_0$ and a unit ball.
\end{proof}

\begin{proof}[Proof of Lemma~\ref{lem:2ndorder}]
Combine the expansions \eqref{eq:2d:1eq_2ord}, \eqref{eq:meq:2ord}, \eqref{eq:mbc:2ord},\eqref{eq:sigma:2ord}, \eqref{eq:K_sigma:2ord} we get the following system {for the unknown $m_2, \sigma_2, \rho_2$}

\begin{subequations}\label{eq:2_system_clean}
  \begin{numcases}{}
    Z\,\Delta\sigma_{2} \;=\; \sigma_{2} - P\,m_{2}
      \hfill \text{in } B \label{eq:2_a}\\[2pt]
    0 \;=\; D(m_{0})\,\Delta m_{2}
         + \frac12 D'(m_{0})\,\Delta (m_{1}^2)
         + \mathbf e_{1}\!\cdot\!\nabla m_{1} \notag\\
    \hspace{2.2em}
    {}- K_{0}\!\bigl(
              m_{0}\,\Delta\sigma_{2}
              + \nabla m_{1}\!\cdot\!\nabla\sigma_{1}
              + m_{1}\,\Delta\sigma_{1}
           \bigr)
      \hfill \text{in } B \label{eq:2_b}\\[2pt]
    m_{2,r}(R_{0},\theta) = 0
      \hfill \text{on } \partial B \label{eq:2_c}\\[2pt]
    \sigma_{2}(R_{0},\theta) =
      -{ R_0}\int_0^{2 \pi} \rho_2(\theta) d \theta {+} \frac{\gamma}{R_0^2}(\rho_2+\rho_{2\theta \theta})
      \hfill \text{on } \partial B \label{eq:2_d}\\[2pt]
    K_{0}\,\sigma_{2,r}(R_{0},\theta) \;=\; 0
      \hfill \text{on } \partial B \label{eq:2_e}
  \end{numcases}
\end{subequations}

Now we can use Lemma \ref{lem:first_ord} to substitute the formula for $m_1, \sigma_1, K_0$ in the expansions~\eqref{eq:2_a}-\eqref{eq:2_e} and derive the following system

\begin{subequations}\label{eq:2_system}
  \begin{numcases}{}
    Z\,\Delta\sigma_{2} = \sigma_{2} - P\,m_{2} \hfill \text{in } B \label{eq:2_a}\\[2pt]
    0 =
      \Delta m_{2}
      + \frac{1}{2}\,\frac{D'(m_{0})}{D(m_{0})^{3}}\Delta(\hat m_{1}^{2})
      + \frac{\mathbf e_{1}\!\cdot\!\nabla\hat m_{1}}{D(m_{0})^{2}}
      - \hat K_{0} m_{0}\,\Delta\sigma_{2} \notag\\
      \hspace{2.2em}
      {}- \frac{\hat K_{0}}{D(m_{0})^{2}}
        \bigl(\nabla\hat m_{1}\!\cdot\!\nabla\hat\sigma_{1}
              + \hat m_{1}\,\Delta\hat\sigma_{1}\bigr)
      \hfill \text{in } B \label{eq:2_b}\\[2pt]
    m_{2,r}(R_{0},\theta) = 0 \hfill \text{on } \partial B \label{eq:2_c}\\[2pt]
    \sigma_{2}(R_{0},\theta) =
      -{ R_0}\int_0^{2 \pi} \rho_2(\theta) d \theta {+} \frac{\gamma}{R_0^2}(\rho_2+\rho_{2\theta \theta})
      \hfill \text{on } \partial B \label{eq:2_d}\\[2pt]
    \sigma_{2,r}(R_{0},\theta) = 0 \hfill \text{on } \partial B \label{eq:2_e}
  \end{numcases}
\end{subequations}

Now introduce the two systems {for $m_{2A}, \sigma_{2A}, \rho_{2A}$ and  $m_{2B}, \sigma_{2B}, \rho_{2B}$ respectively} 
\begin{subequations}\label{eq:2a_system}
  \begin{numcases}{}
    Z\,\Delta \sigma_{2A} \;=\; \sigma_{2A} - P\,m_{2A}
      \hfill \text{in } B \label{eq:2a_a}\\[2pt]
    0 \;=\; \Delta m_{2A}
      - \hat K_0 m_0\,\Delta \sigma_{2A}
      + \mathbf e_1 \!\cdot\! \nabla \hat m_1
      - \hat K_0\, \nabla \hat m_1 \!\cdot\! \nabla \hat \sigma_1
      - \hat K_0\, \hat m_1 \,\Delta \hat \sigma_1
      \hfill \text{in } B \label{eq:2a_b}\\[2pt]
    m_{2A,r}(R_0,\theta) \;=\; 0
      \hfill \text{on } \partial B \label{eq:2a_c}\\[2pt]
    \sigma_{2A}(R_0,\theta) =
      -{ R_0}\int_0^{2 \pi} \rho_{2A}(\theta) d \theta {+} \frac{\gamma}{R_0^2}(\rho_{2A}+\rho_{2A,\theta \theta})
      \hfill \text{on } \partial B \label{eq:2a_d}\\[2pt]
    \sigma_{2A,r}(R_0,\theta) \;=\; 0
      \hfill \text{on } \partial B \label{eq:2a_e}
  \end{numcases}
\end{subequations}

\begin{subequations}\label{eq:2b_system}
  \begin{numcases}{}
    Z\,\Delta \sigma_{2B} \;=\; \sigma_{2B} - P\,m_{2B}
      \hfill \text{in } B \label{eq:2b_a}\\[2pt]
    0 \;=\; \Delta m_{2B}
      - \hat K_0 m_0\,\Delta \sigma_{2B}
      + \frac{1}{2}\,\Delta\!\bigl(\hat m_{1}^{2}\bigr)
      \hfill \text{in } B \label{eq:2b_b}\\[2pt]
    m_{2B,r}(R_0,\theta) \;=\; 0
      \hfill \text{on } \partial B \label{eq:2b_c}\\[2pt]
    \sigma_{2B}(R_0,\theta) \;=\;
      -{ R_0}\int_0^{2 \pi} \rho_{2B}(\theta) d \theta {+} \frac{\gamma}{R_0^2}(\rho_{2B}+\rho_{2B,\theta \theta})
      \hfill \text{on } \partial B \label{eq:2b_d}\\[2pt]
    \sigma_{2B,r}(R_0,\theta) \;=\; 0
      \hfill \text{on } \partial B \label{eq:2b_e}
  \end{numcases}
\end{subequations}

By using the ansatz from Appendix \ref{app:general_expansions}, we can search for the solutions $m_{2A}, m_{2B}$ in the following form
\begin{align}
    m_{2A}(r, \theta) &= m_{20A}(r) + m_{22A}(r)\cos(2\theta)\\
    \sigma_{2A}(r, \theta) &= \sigma_{20A}(r) + \sigma_{22A}(r)\cos(2\theta)\\
    \rho_{2A}(\theta) &= \rho_{20A} + \rho_{22A} \cos (2 \theta),
\end{align}
and
\begin{align}
    m_{2B}(r, \theta) &= m_{20B}(r) + m_{22B}(r)\cos(2\theta)\\
    \sigma_{2B}(r, \theta) &= \sigma_{20B}(r) + \sigma_{22B}(r)\cos(2\theta)\\
    \rho_{2B}(\theta) &= \rho_{20B} + \rho_{22B} \cos (2 \theta),
\end{align}
where the equations for the Fourier coefficients are derived by plugging the expansion above into \eqref{eq:2a_system} and \eqref{eq:2b_system} respectively. 
{
 The four systems for $(m_{20A}, \sigma_{20A}, \rho_{20A})$, $(m_{20B}, \sigma_{20B}, \rho_{20B})$, $(m_{22A}, \sigma_{22A}, \rho_{22A})$, $(m_{22B}, \sigma_{22B}, \rho_{22B})$ are presented below. All systems are solved as follows. First, one solves a well posed system (a),(b),(c),(e) for $m$ and $\sigma$ and then recovers the value of the corresponding $\rho$ via the boundary condition (d).

\begin{subequations}\label{eq:mode0A}
  \begin{numcases}{}
    Z\!\left(\sigma_{20A}''+\frac{1}{r}\sigma_{20A}'\right) \;=\; \sigma_{20A} - P\,m_{20A},
      \qquad 0<r<R_0, \label{eq:mode0A_a}\\[6pt]
    0 \;=\; \left(m_{20A}''+\frac{1}{r}m_{20A}'\right)
             - \hat K_0 m_0\!\left(\sigma_{20A}''+\frac{1}{r}\sigma_{20A}'\right)
             + F_A^{(0)}(r),
      \qquad 0<r<R_0, \label{eq:mode0A_b}\\[6pt]
    m_{20A}'(R_0)=0, \label{eq:mode0A_c}\\[4pt]
    \sigma_{20A}(R_0)= -2\pi {R_0}\rho_{20A}{+}\dfrac{\gamma}{R_0^{2}}\,\rho_{20A}, \label{eq:mode0A_d}\\
    \sigma_{20A}'(R_0)=0, \label{eq:mode0A_e}
  \end{numcases}
\end{subequations}
where 
\begin{equation*}
  F_A^{(0)}(r)
= \frac12\!\left(\hat m_{11}'+\frac{\hat m_{11}}{r}\right)
-\frac{\hat K_0}{2}\!\left(\hat m_{11}'\,\hat \sigma_{11}' + \hat m_{11}\hat \sigma_{11}^{''}+\frac{\hat m_{11}\hat \sigma_{11}^{'}}{r}\right),
\end{equation*}

\begin{subequations}\label{eq:mode0B}
  \begin{numcases}{}
    Z\!\left(\sigma_{20B}''+\frac{1}{r}\sigma_{20B}'\right) \;=\; \sigma_{20B} - P\,m_{20B},
      \qquad 0<r<R_0, \label{eq:mode0B_a}\\[6pt]
    0 \;=\; \left(m_{20B}''+\frac{1}{r}m_{20B}'\right)
             - \hat K_0 m_0\!\left(\sigma_{20B}''+\frac{1}{r}\sigma_{20B}'\right)
             + F_B^{(0)}(r),
      \qquad 0<r<R_0, \label{eq:mode0B_b}\\[6pt]
    m_{20B}'(R_0)=0, \label{eq:mode0B_c}\\[4pt]
    \sigma_{20B}(R_0)= -2\pi {R_0} \rho_{20B}{+}\dfrac{\gamma}{R_0^{2}}\,\rho_{20B},\label{eq:mode0B_d} \\
    \sigma_{20B}'(R_0)=0, 
  \end{numcases}
\end{subequations}
where 
\begin{equation*}
  F_B^{(0)}(r) = \frac12\left(\hat m_{11}'^2+ \hat m_{11} \hat m_{11}'' + \frac1r \hat m_{11}\hat m_{11}'\right),
\end{equation*}

\begin{subequations}\label{eq:mode2A}
  \begin{numcases}{}
    Z\!\left(\sigma_{22A}''+\frac{1}{r}\sigma_{22A}'-\frac{4}{r^{2}}\sigma_{22A}\right)
      \;=\; \sigma_{22A} - P\,m_{22A},
      \qquad 0<r<R_0, \label{eq:mode2A_a}\\[8pt]
    0 \;=\; \left(m_{22A}''+\frac{1}{r}m_{22A}'-\frac{4}{r^{2}}m_{22A}\right)
             - \hat K_0 m_0\!\left(\sigma_{22A}''+\frac{1}{r}\sigma_{22A}'-\frac{4}{r^{2}}\sigma_{22A}\right)
             + F_A^{(2)}(r),
      \qquad 0<r<R_0, \label{eq:mode2A_b}\\[8pt]
    m_{22A}'(R_0)=0, \label{eq:mode2A_c}\\[4pt]
    \sigma_{22A}(R_0)= {-}\dfrac{3\gamma}{R_0^{2}}\,\rho_{22A},\label{eq:mode2A_d} \\
    \sigma_{22A}'(R_0)=0, \label{eq:mode2A_e}
  \end{numcases}
\end{subequations}
where
\begin{equation*}
  F_A^{(2)}(r)
= \frac12\left(\hat m_{11}'-\frac{\hat m_{11}}{r}\right)
-\frac{\hat K_0}{2}\left(\hat m_{11}'\hat \sigma_{11}' + \hat m_{11}\hat \sigma_{11}'' + \frac1r\hat m_{11}\hat \sigma_{11}'-\frac{2}{r^2}\hat m_{11}\hat \sigma_{11}\right),
\end{equation*}

\begin{subequations}\label{eq:mode2B}
  \begin{numcases}{}
    Z\!\left(\sigma_{22B}''+\frac{1}{r}\sigma_{22B}'-\frac{4}{r^{2}}\sigma_{22B}\right)
      \;=\; \sigma_{22B} - P\,m_{22B},
      \qquad 0<r<R_0, \label{eq:mode2B_a}\\[8pt]
    0 \;=\; \left(m_{22B}''+\frac{1}{r}m_{22B}'-\frac{4}{r^{2}}m_{22B}\right)
             - \hat K_0 m_0\!\left(\sigma_{22B}''+\frac{1}{r}\sigma_{22B}'-\frac{4}{r^{2}}\sigma_{22B}\right)
             + F_B^{(2)}(r),
      \qquad 0<r<R_0, \label{eq:mode2B_b}\\[8pt]
    m_{22B}'(R_0)=0, \label{eq:mode2B_c}\\[4pt]
    \sigma_{22B}(R_0)= {-}\dfrac{3\gamma}{R_0^{2}}\,\rho_{22B}, \label{eq:mode2B_d} \\
    \sigma_{22B}'(R_0)=0, \label{eq:mode2B_e}
  \end{numcases}
\end{subequations}
where
\begin{equation*}
  F_B^{(2)}(r) = \frac12\left(\hat m_{11}'^2+ \hat m_{11} \hat m_{11}'' + \frac1r \hat m_{11}\hat m_{11}'\right) - \frac{\hat m_{11}^2}{r^2}.
\end{equation*}
}
\end{proof}
\begin{proof}[Proof of Lemma~\ref{lem:3rdorder}]
   
    Using the expansions from Appendix~\ref{app:general_expansions}, we consider the third order expansion of the system~\eqref{eq:1_2d}--\eqref{eq:5_2d} which is given by 
\begin{subequations}\label{eq:third_order_system}
  \begin{numcases}{}
    Z\,\Delta \sigma_{3} \;=\; \sigma_{3} - P\,m_{3}
      \hfill \text{in } B \label{eq:3_a}\\[2pt]
    0 \;=\; D(m_{0})\,\Delta m_{3}
      + \,D'(m_{0})\,\Delta(m_{1}m_{2})
      + \frac{1}{6}\,D''(m_{0})\,\Delta(m_{1}^{3})
      + \mathbf e_{1}\!\cdot\!\nabla m_{2} \notag\\
    \hspace{2.2em}
    {}- K_{2}\,m_{0}\,\Delta\sigma_{1}
      - K_{0}\,\nabla\!\cdot\!\bigl(m_{2}\nabla\sigma_{1}
                                    + m_{1}\nabla\sigma_{2}
                                    + m_{0}\nabla\sigma_{3}\bigr)
      \hfill \text{in } B \label{eq:3_b}\\[2pt]
    m_{3,r} = -\,m_{1,rr}\,\rho_{2}
      {+} \frac{1}{R_{0}^{2}}\,m_{1,\theta}\,\rho_{2,\theta}
      \hfill \text{on } \partial B \label{eq:3_c}\\[2pt]
    \sigma_{3} \;=\; -\,\sigma_{1,r}\,\rho_{2}
      {+} \frac{\gamma}{R_{0}^{2}}\bigl(\rho_{3}+\rho_{3,\theta\theta}\bigr)
      \hfill \text{on } \partial B \label{eq:3_d}\\[2pt]
    K_{0}\!\left(
      \sigma_{3,r}
      + \sigma_{1,rr}\,\rho_{2}
      {-} \frac{1}{R_{0}^{2}}\sigma_{1,\theta}\,\rho_{2,\theta}
    \right)
    + K_{2}\,\sigma_{1,r} \;=\; 0
      \hfill \text{on } \partial B \label{eq:3_e}
  \end{numcases}
\end{subequations}

    After that we use the ansatz for $m_3, \sigma_3$, and $\rho_3$ given by \eqref{eq:ans:m3}, \eqref{eq:ans:s3}, \eqref{eq:ans:rho3} and combining the terms in front and by collecting $\cos \theta$ terms we derive the following system for $m_{31}, \sigma_{31}$,
\begin{subequations}\label{eq:ms31_system}
  \begin{numcases}{}
    Z\left(\sigma_{31}''+\frac{1}{r}\sigma_{31}'-\frac{1}{r^{2}}\sigma_{31}\right)-\sigma_{31}
      \;=\; -P\,m_{31}
      \hfill \text{for } 0<r<R_0 \label{eq:ms31_a}\\[2pt]
    D(m_{0})\left(m_{31}''+\frac{1}{r}m_{31}'-\frac{1}{r^{2}}m_{31}\right)
    - K_{0}m_{0}\left(\sigma_{31}''+\frac{1}{r}\sigma_{31}'-\frac{1}{r^{2}}\sigma_{31}\right)
    = \notag\\
    \hspace{2.2em}
    K_{2}m_{0}\left(\sigma_{11}''+\frac{1}{r}\sigma_{11}'-\frac{1}{r^2}\sigma_{11}\right) - f(r)
      \hfill \text{for } 0<r<R_0 \label{eq:ms31_b}\\[2pt]
    m'_{31}(R_0) + \bigl(\rho_{20}+ \frac{1}{2}\rho_{22}\bigr) m_{11}''(R_0)
      {-} \frac{\rho_{22}}{R_0^2}\, m_{11}(R_0) \;=\; 0
  \label{eq:ms31_c}\\[2pt]
    \sigma_{31}(R_0) + \bigl(\rho_{20} + \frac12 \rho_{22}\bigr) \sigma'_{11}(R_0) \;=\; 0
   \label{eq:ms31_d}\\[2pt]
    K_2\,\sigma'_{11}(R_0)
      + K_0\!\left(\sigma'_{31}(R_0)
                   + \bigl(\rho_{20}+ \frac{1}{2}\rho_{22}\bigr) \sigma''_{11}(R_0)
                   {-}\frac{\rho_{22}}{R_0^2}\, \sigma_{11}(R_0)\right)=0 \label{eq:ms31_e}\\[2pt]
    m_{31}(0) \;=\; 0\label{eq:ms31_f}\\[2pt]
    \sigma_{31}(0) \;=\; 0\label{eq:ms31_g}
  \end{numcases}
\end{subequations}

where $f(r)$ depends only on lower order terms and is given by 

\begin{align}\label{eq:f(s)}
&f\left(r\right) = \frac{D''(m_{0})}{8 r^{2}}\Bigl(
 - m_{11}^{3}
 + 6 r^{2} m_{11}(m_{11}')^{2}
 + 3 r\, m_{11}^{2}(m_{11}' + r\,m_{11}'')
\Bigr) \notag\\
&
+ \frac{D'(m_{0})}{8 r^{2}}\Bigl(
 - 8 m_{11} m_{20}
 - 4 m_{11} m_{22}
 + 8 r m_{20} m_{11}'
 + 4 r m_{22} m_{11}'
 + 8 r m_{11} m_{20}'\notag\\
&
 + 16 r^{2} m_{11}' m_{20}'
 + 4 r m_{11} m_{22}'
 + 8 r^{2} m_{11}' m_{22}' 
 + 8 r^{2} m_{20} m_{11}''\notag\\
 &
 + 4 r^{2} m_{22} m_{11}''
 + 8 r^{2} m_{11} m_{20}''
 + 4 r^{2} m_{11} m_{22}''
\Bigr) \notag\\
&
+ \frac{1}{8 r^{2}}\Bigl(
   8 r^{2} m_{20}'
 + 4 r^{2} m_{22}' 
 + 8 r m_{22} \notag \\
 &
 + K_0 \left(
   8 m_{20} \sigma_{11}
 - 4 m_{22} \sigma_{11}
 + 8 m_{11} \sigma_{22}
 - 8 r m_{20} \sigma_{11}'
 - 4 r m_{22} \sigma_{11}' \right.\notag\\
&
 - 8 r^{2} m_{20}' \sigma_{11}'
 - 4 r^{2} m_{22}' \sigma_{11}' 
 - 8 r m_{11} \sigma_{20}'
 - 4 r m_{11} \sigma_{22}'
 - 4 r^{2} m_{11}' \sigma_{22}'\notag\\
  &
\left.
 - 8 r^{2} m_{20} \sigma_{11}''
 - 4 r^{2} m_{22} \sigma_{11}''
 - 8 r^{2} m_{11}' \sigma_{20}'
 - 8 r^{2} m_{11} \sigma_{20}''
 - 4 r^{2} m_{11} \sigma_{22}''
 \right)
\Bigr).
\end{align}
Note that the direct substitution and collecting the alike terms together shows that $f(r)$ can be written as

\begin{equation}
    f(r) = \frac{D''(m_0)}{D(m_0)^3}f_1(r)+\frac{D'(m_0)^2}{D(m_0)^4}f_2(r)+\frac{D'(m_0)}{D(m_0)^3}f_3(r)+\frac{1}{D(m_0)^2}f_4(r),
\end{equation}
where $f_1, f_2, f_3, f_4$ are given by

\begin{align}\label{eq:f1}
f_1\left(r\right)=\frac{1}{8r^{2}}\hat m_{11}\,\Big(-\hat m_{11}^{2}
+ 6r^{2}(\hat m_{11}')^{2}
+ 3r\hat m_{11}(\hat m_{11}' + r\hat m_{11}'')\Big),
\end{align}
\begin{align}\label{eq:f2}
f_2\left(r\right)&=
\frac{1}{8 r^{2}}\Bigl(
 - 8 \hat m_{11} m_{20B}
 - 4 \hat m_{11} m_{22B}
 + 8 r m_{20B} \hat m_{11}'
 + 4 r m_{22B} \hat m_{11}'
 + 8 r \hat m_{11} m_{20B}'\notag\\
&
 + 16 r^{2} \hat m_{11}' m_{20B}'
 + 4 r \hat m_{11} m_{22B}'
 + 8 r^{2} \hat m_{11}' m_{22B}' 
 + 8 r^{2} m_{20B} \hat m_{11}''\notag\\
 &
 + 4 r^{2} m_{22B} \hat m_{11}''
 + 8 r^{2} \hat m_{11} m_{20B}''
 + 4 r^{2} \hat m_{11} m_{22B}''
\Bigr),
\end{align}
\begin{align}\label{eq:f3}
f_3\left(r\right)&=
\frac{1}{8 r^{2}}\Bigl(
 - 8 \hat m_{11} m_{20A}
 - 4 \hat m_{11} m_{22A}
 + 8 r m_{20A} \hat m_{11}'
 + 4 r m_{22A} \hat m_{11}'
 + 8 r \hat m_{11} m_{20A}'\notag\\
&
 + 16 r^{2} \hat m_{11}' m_{20A}'
 + 4 r \hat m_{11} m_{22A}'
 + 8 r^{2} \hat m_{11}' m_{22A}' 
 + 8 r^{2} m_{20A} \hat m_{11}''\notag\\
 &
 + 4 r^{2} m_{22A} \hat m_{11}''
 + 8 r^{2} \hat m_{11} m_{20A}''
 + 4 r^{2} \hat m_{11} m_{22A}''\notag \\
&
+ 
   8 r^{2} m_{20B}'
 + 4 r^{2} m_{22B}' 
 + 8 r m_{22B} \notag \\
 &
 + \hat K_0 \left(
   8 m_{20B} \hat \sigma_{11}
 - 4 m_{22B} \hat \sigma_{11}
 + 8 \hat m_{11} \sigma_{22B}
 - 8 r m_{20B} \hat \sigma_{11}'
 - 4 r m_{22B} \hat \sigma_{11}' \right.\notag\\
&
 - 8 r^{2} m_{20B}' \hat \sigma_{11}'
 - 4 r^{2} m_{22B}' \hat \sigma_{11}' 
 - 8 r \hat m_{11} \sigma_{20B}'
 - 4 r \hat m_{11} \sigma_{22B}'
 - 4 r^{2} \hat m_{11}' \sigma_{22B}'\notag\\
  &
\left.
 - 8 r^{2} m_{20B} \hat \sigma_{11}''
 - 4 r^{2} m_{22B} \hat \sigma_{11}''
 - 8 r^{2} \hat m_{11}' \sigma_{20B}'
 - 8 r^{2} \hat m_{11} \sigma_{20B}''
 - 4 r^{2} \hat m_{11} \sigma_{22B}''
 \right)
\Bigr).
\end{align}
\begin{align}\label{eq:f4}
&f_4\left(r\right) = \frac{1}{8r^{2}}\Bigl(
   8 r^{2} m_{20A}'
 + 4 r^{2} m_{22A}' 
 + 8 r m_{22A} \notag \\
 &
 + \hat K_0 \left(
   8 m_{20A} \hat \sigma_{11}
 - 4 m_{22A} \hat \sigma_{11}
 + 8 \hat m_{11} \sigma_{22A}
 - 8 r m_{20A} \hat \sigma_{11}'
 - 4 r m_{22A} \hat \sigma_{11}' \right.\notag\\
&
 - 8 r^{2} m_{20A}' \hat \sigma_{11}'
 - 4 r^{2} m_{22A}' \hat \sigma_{11}' 
 - 8 r \hat m_{11} \sigma_{20A}'
 - 4 r \hat m_{11} \sigma_{22A}'
 - 4 r^{2} \hat m_{11}' \sigma_{22A}'\notag\\
  &
\left.
 - 8 r^{2} m_{20A} \hat \sigma_{11}''
 - 4 r^{2} m_{22A} \hat \sigma_{11}''
 - 8 r^{2} \hat m_{11}' \sigma_{20A}'
 - 8 r^{2} \hat m_{11} \sigma_{20A}''
 - 4 r^{2} \hat m_{11} \sigma_{22A}''
 \right)
\Bigr).
\end{align}
The main idea here is to find $m_{31}$ in terms of $\sigma_{31}$ that will allow to reduce the system~\eqref{eq:ms31_system} to a single ODE for $\sigma_{31}$.  This ODE has an extra boundary condition and the solvability condition is found with the help of the test function trick. In general the solution for $m_{31}$ is obtained via the Green function, but we keep the two most important terms including $K_2$ and $\sigma_{31}$ explicit:

\begin{align}
    m_{31}(r) = \frac{K_0m_0}{D(m_0)} \sigma_{31}(r) + K_2\frac{m_0}{D(m_0)} \sigma_{11}(r) + C_1 r+ C_2 \frac1r \nonumber\\
    - \frac{1}{D(m_0)} \left( \frac{r}{2} \int_0^{r} f(s)\,ds- \frac{1}{2r} \int_0^r s^2 f(s)\,ds \right).
\end{align}
Now because of the regularity at $0$ we have $C_2=0$ and $C_1$ can be found from direct differentiation

\begin{align}
    C_1 = &m_{31}'(R_0) - \frac{K_0 m_0}{D(m_0)} \sigma_{31}'(R_0) - \frac{K_2 m_0}{D(m_0)} \sigma_{11}'(R_0) \nonumber\\ 
    &+\frac{1}{D(m_0)}\left(\frac{1}{2}\int_{0}^{R_0}f(s)\,ds+\frac{1}{2R_0^2}\int_{0}^{R_0} s^2f\left(s\right)ds  \right).
\end{align}
Finally, the Neumann boundary conditions for $\sigma_{31}$ and $m_{31}$ can be substituted and $C_1$ can be expressed as 

\begin{align}\label{eq:C1}
    &C_1 = -\bigl(\rho_{20}+\frac{1}{2}\rho_{22}\bigr) m_{11}''(R_0)
      {+} \frac{\rho_{22}}{R_0^2}m_{11}(R_0) - \frac{K_2 m_0}{D(m_0)} \sigma_{11}'(R_0)\nonumber \\
      & +\frac{K_0 m_0}{D(m_0)} \left( \frac{K_2}{K_0}\sigma'_{11}(R_0)
                   + \bigl(\rho_{20}+ \frac{1}{2}\rho_{22}\bigr) \sigma''_{11}(R_0)
                   {-} \frac{\rho_{22}}{R_0^2}\, \sigma_{11}(R_0)\right)\nonumber\\
      &+\frac{1}{D(m_0)}\left(\frac{1}{2}\int_{0}^{R_0}f(s)\,ds+\frac{1}{2R_0^2}\int_{0}^{R_0} s^2f\left(s\right)ds \right)\nonumber\\
   &= \bigl(\rho_{20}+\frac{1}{2}\rho_{22}\bigr)\left(\frac{K_0 m_0}{D(m_0)}  \sigma''_{11}(R_0)-m_{11}''(R_0)\right) 
      {+} \frac{\rho_{22}}{R_0^2}m_{11}(R_0)+
      \nonumber \\
   & +\frac{1}{D(m_0)}\left(\frac{1}{2}\int_{0}^{R_0}f(s)\,ds+\frac{1}{2R_0^2}\int_{0}^{R_0} s^2f\left(s\right)ds\right).
\end{align}

Now, we can substitute $m_{31}$ in the first equation of \eqref{eq:ms31_system} in terms of $\sigma_{31}$ and derive the following ODE for $\sigma_{31}$ with one extra boundary condition that will allow to determine $K_2$
\begin{subequations}\label{eq:s31_system}
  \begin{numcases}{}
    Z\!\left(\sigma_{31}''+\frac{1}{r}\sigma_{31}'-\frac{1}{r^{2}}\sigma_{31}\right)
    + \left(\frac{K_0 P m_0}{D(m_0)}-1\right)\sigma_{31}
    = -K_2\frac{P m_0}{D(m_0)}\sigma_{11}
     \notag\\
    - \frac{P}{D(m_0)} \left(\frac{1}{2r} \int_0^r s^2 f(s)\,ds +\frac{r}{2}\int_{r}^{R_0}f(s)\,ds+\frac{r}{2R_0^2}\int_{0}^{R_0} s^2f\left(s\right)ds\right)\notag \\
  -\,P\!\left[
      \bigl(\rho_{20}+ \frac{1}{2}\rho_{22}\bigr) \left(\frac{K_0 m_0}{D(m_0)} \sigma''_{11}(R_0)-m_{11}''(R_0)\right){+}\frac{\rho_{22}}{R_0^2}m_{11}(R_0)
      \right]r \text{ for } 0<r<R_0 \label{eq:s31_pde}\\[2pt]
    \sigma_{31}(R_0) + \bigl(\rho_{20}+\frac{1}{2}\rho_{22}\bigr)\,\sigma_{11}'(R_0) = 0\label{eq:s31_bc1}\\[2pt]
    K_2\,\sigma_{11}'(R_0)
    + K_0\!\left(
        \sigma_{31}'(R_0)
        + \bigl(\rho_{20}+\frac{1}{2}\rho_{22}\bigr)\sigma_{11}''(R_0)
        {-} \frac{\rho_{22}}{R_0^{2}}\,\sigma_{11}(R_0)
      \right) = 0  \label{eq:s31_bc2}\\[2pt]
    \sigma_{31}(0) = 0.\label{eq:s31_bc3}
  \end{numcases}
\end{subequations}

Now introduce the test function $U$ that solves the homogeneous equation 
\begin{subequations}
    \begin{numcases}{}
         Z(U''+ \frac 1r U'- \frac{1}{r^2} U) + \left(\frac{K_0Pm_0}{D(m_0)}-1\right) U = 0, \quad 0<r<R_0 \\
         U(0) = 0 \\
         U(R_0) = 1.
    \end{numcases}
\end{subequations}
The solution to which is explicitly given by the Bessel function
\begin{equation}
    U(r) = \frac{J_1(\alpha/R_0 r)}{J_1(\alpha)},
\end{equation}
where $\alpha = \dfrac{R_0}{\sqrt Z}\sqrt{\dfrac{K_0Pm_0}{D(m_0)}-1} =\dfrac{R_0}{\sqrt Z}\sqrt{{\hat K_0Pm_0}-1} $ is independent of $D(m)$.

Now if we integrate both sides of ODE in~\eqref{eq:s31_pde} and take into account the boundary condition we derive a linear equation for $K_2$. 
Indeed, the LHS of~\eqref{eq:s31_pde} reduces to
\begin{align}\label{eq:by_parts}
    &\int_0^{R_0} \left( Z\left(\sigma_{31}''+ \frac 1r \sigma_{31}'-\frac{1}{r^2} \sigma_{31}\right) + \left(\frac{K_0Pm_0}{D(m_0)}-1\right) \sigma_{31} \right) U r\,dr \nonumber
    \\\nonumber
     &=Z\left[- r \sigma_{31}U'+r U \sigma_{31}'\right]^{R_0}_0  
     \\
     &~~~+ \int_0^{R_0} \left( Z\left(U''+ \frac 1r U'-\frac{1}{r^2} U\right) + \left(\frac{K_0Pm_0}{D(m_0)}-1\right) U \right) \sigma_{31}r\,dr  \nonumber 
     \\
     &= -ZR_0 \frac{\alpha/R_0 J_1'(\alpha )}{J_1(\alpha)}\sigma_{31}(R_0)+ZR_0 \sigma_{31}'(R_0) 
     \nonumber\\
     &=
     -Z R_0 \left(\frac{K_2}{K_0^2} - \left(\rho_{20}+\frac{\rho_{22}}{2}\right)\frac{\alpha/R_0}{K_0} \frac{J_1'(\alpha)}{J_1(\alpha)}+\left(\rho_{20}+\frac{\rho_{22}}{2}\right)\sigma_{11}''(R_0)\right) \nonumber \\
     &=-Z R_0 \left(\frac{K_2}{K_0^2} + \left(\rho_{20}+\frac{\rho_{22}}{2}\right)\left(\sigma_{11}''(R_0)-\frac{\alpha/R_0}{K_0} \frac{J_1'(\alpha)}{J_1(\alpha)}\right)\right) 
\end{align}
and the RHS of \eqref{eq:s31_pde} is given only in terms of known functions. By equating both sides we obtain the desired formula~\eqref{eq:K2_general} for $K_2$.
\end{proof}

{
\section{Numerical implementation and verification}\label{app:numerics}
The asymptotic traveling-wave expansion is constructed up to second order in the velocity parameter \(V\), and then verified by evaluating the residuals of the full nonlinear system. The \(O(V)\) term in the expansion is computed analytically. More precisely, the base radius \(R_0\) is determined from the mass constraint, the constant state is \(m_0=1/(\pi R_0^2)\), and the bifurcation point \(\hat K_0\) as the first positive solution of a transcendental equation. Once these quantities are fixed, the first-order modes are available in closed form in terms of Bessel functions. In particular, the stress mode \(\sigma_{11}(r)\) and its derivatives are evaluated analytically, and the density mode \(m_{11}(r)\) is then recovered algebraically from the linearized equations.

The \(O(V^2)\) term is computed by numerically evaluating the integral representations of the solutions to \(n=0\) and \(n=2\) Fourier modes. More precisely, the intermediate equations are solved by explicit integral formulas of Green-function-type, with regularity at \(r=0\) and the boundary condition at \(r=R_0\) built into the representation. The corresponding stress correction is then reconstructed from a Helmholtz-type radial equation using Bessel-function integral formulas, and the myosin correction is recovered algebraically. Finally, the $n=0$ component is shifted by a constant so that the \(O(V^2)\) mass constraint is satisfied. This integral approach is numerically more robust than directly solving the singular radial ODE system, especially near \(r=0\).

The truncated approximation is assembled from the analytic \(O(V)\) contribution and the numerically computed \(O(V^2)\) modes. Its accuracy is assessed by substituting the approximation into the full bulk equations and boundary conditions and measuring the resulting residuals. Table~\ref{tab:eoc_total_error} reports the total residual error for a sequence of values of \(V\), together with the experimental order of convergence (EOC). The observed EOC is approximately \(3\), which is consistent with the expected residual of \(O(V^3)\).

\begin{table}[ht]
\centering
\begin{tabular}{rrr}
\toprule
\(V\) & total error & EOC \\
\midrule
1.000000 & 2.016230 & --- \\
0.702422 & 0.686519 & 3.050072 \\
0.493396 & 0.235731 & 3.026270 \\
0.346572 & 0.081314 & 3.013343 \\
0.243440 & 0.028115 & 3.006675 \\
0.170998 & 0.009732 & 3.003313 \\
0.120112 & 0.003371 & 3.001633 \\
0.084370 & 0.001168 & 3.000794 \\
0.059263 & 0.000405 & 3.000364 \\
0.041628 & 0.000140 & 3.000079 \\
0.029240 & 0.000049 & 2.999582 \\
0.020539 & 0.000017 & 2.998707 \\
\bottomrule
\end{tabular}
\caption{Total residual error of the truncated second-order asymptotic approximation and the corresponding experimental order of convergence (EOC) as \(V\to0\). The observed third-order decay confirms that the residual is \(O(V^3)\).}\label{tab:eoc_total_error}
\end{table}
Finally, the coefficient \(K_2\) is verified independently through the third-order problem. Rather than solving the full \(O(V^3)\) system, we evaluate the corresponding solvability condition for the $n=1$ Fourier mode and check the compatibility of the third-order equations. This provides a second, independent numerical confirmation that the computed value of \(K_2\) is the one required for consistency of the asymptotic expansion.
}
\end{document}